\newtheorem{theorem}{Theorem}[section]
\newtheorem{lemma}[theorem]{Lemma}
\newtheorem{proposition}[theorem]{Proposition}
\newtheorem{remark}[theorem]{Remark}
\newtheorem{definition}[theorem]{Definition}
\DeclareMathOperator{\wn}{Wind}
\DeclareMathOperator{\Trace}{Tr}
\DeclareMathOperator{\trace}{tr}
\DeclareMathOperator{\Ln}{Ln}
\DeclareMathOperator{\Index}{Index}
\def\C{\mathbb C}
\def\N{\mathbb N}
\def\S{\mathbb S}
\def\R{\mathbb R}
\def\Z{\mathbb Z}
\def\B{\mathcal B}
\def\d{\mathrm d}
\def\D{\mathcal D}
\def\e{\mathrm e}
\def\E{\mathcal E}
\def\F{\mathscr F}
\def\H{\mathcal H}
\def\I{\mathcal I}
\def\J{\mathcal J}
\def\K{\mathcal K}
\def\SS{\mathcal S}
\def\U{\mathcal U}
\def\W{\mathcal W}
\def\one{I}
\def\Dom{\mathcal D}
\def\p{{\rm p}}
\def\i{{\rm i}}
\def\r{{\rm r}}
\def\Ka{\mathcal K}
\def\Ia{\mathcal I}
\def\Ha{\mathcal H}
\def\Ja{\mathcal J}
\def\Ya{\mathcal Y}
\newcommand\cF{{\mathscr F}}
\begin{document}

\title{Topological Levinson's theorem for inverse square potentials:
complex, infinite, but not exceptional}

\author{H. Inoue, S. Richard\footnote{Supported by the grant
\emph{Topological invariants through scattering theory and noncommutative geometry} from Nagoya University,
and on leave of absence from Univ.~Lyon, Universit\'e Claude Bernard Lyon
1, CNRS UMR 5208, Institut Camille Jordan, 43 blvd.~du 11 novembre 1918, F-69622
Villeurbanne cedex, France}}

\date{\small}
\maketitle \vspace{-1cm}

\begin{quote}
\emph{
\begin{itemize}
\item[] Graduate school of mathematics, Nagoya University,
Chikusa-ku, Nagoya 464-8602, Japan
\item[] \emph{E-mails:} inoue.hideki124@gmail.com, richard@math.nagoya-u.ac.jp
\end{itemize}
}
\end{quote}

\begin{abstract}
In this review paper we carry on our investigations on Schr\"odinger operators with inverse square potentials on the half-line.
Depending on several parameters, such operators possess either a finite number
of complex eigenvalues, or an infinite one, but also some spectral singularities embedded in the continuous spectrum (exceptional situations). The spectral and the scattering theory for these operators is recalled, and new results for the exceptional cases are provided.
Some index theorems in scattering theory are also developed, and explanations why these results can not be extended to the exceptional cases are provided.
\end{abstract}

\textbf{2010 Mathematics Subject Classification:} 46L89, 81Q80
\smallskip

\textbf{Keywords:} Scattering theory, index theorem,
spectral singularity, Fredholm, semi-Fredholm


\section{Introduction}

Levinson's theorem is a relation between the number of bound states of a quantum mechanical system
and an expression related to the scattering part of that system.
It was originally established by N. Levinson in \cite{Lev} for Schr\"odinger operators with a spherically symmetric potential,
and has then been developed by numerous researchers on a purely analytical basis.
About 10 years ago, it has been shown that this relation can be interpreted as an index theorem in scattering theory, and the results of these investigations have been
summarized in the review paper \cite{R}.
More recently, a scattering system involving several parameters has been exhibited in \cite{DR} and this system has been at the root of several extensions of Levinson's theorem: In \cite{NPR} it has been shown that complex eigenvalues can also be counted, and in \cite{IR} a first attempt for dealing with an infinite number of eigenvalues has been introduced. However, some of the operators exhibited in \cite{DR} were not used for these extensions, and our aim in the present paper is to complete the investigations
for the entire family.

Before entering into the details of our investigations, let us immediately mention
that part of our aim has been unsuccessful. Indeed, for a reduced family of operators, we end up with wave operators which are either unbounded or not Fredholm. In such a situation, computing their Fredholm index is either much more involved or simply not possible. Nevertheless, we provide an exhaustive picture of the situation and describe the limitations of our approach. We hope that our presentation will motivate further investigations for the trickiest cases.

Let us now be more precise on the model and on the results, see also Section \ref{sec_model} for more details on the model.
The initial system consists in a family of Schr\"odinger operators of the form
$-\partial_r^2+\big(m^2-\frac{1}{4}\big)\frac{1}{r^2}$
on the half-line $\R_+$.
The parameter $m\in \C$ with $\Re(m)>-1$ is used for describing the coupling constant for the potential.
For $m\neq 0$ an additional parameter $\kappa\in \C$ is used
for defining the boundary condition at $r=0$,
while for $m=0$ another family of operators indexed by a boundary parameter $\nu$ is defined.
The study of the corresponding families of closed operators $H_{m,\kappa}$ and $H_0^\nu$ in $L^2(\R_+)$ has been initiated and extensively performed in \cite{DR}.

Among all operators $H_{m,\kappa}$ and $H_0^\nu$ only a few are self-adjoint.
They are exhibited in Lemma \ref{lemma_SA}. In the large complementary family, some
pairs of parameters $(m,\kappa)$ and some parameters $\nu$ are called \emph{exceptional} if they satisfy a prescribed condition provided in Definition
\ref{def_exceptional}. As shown in Remark \ref{rem_sin} the corresponding operators
$H_{m,\kappa}$ or $H_0^\nu$ possess \emph{spectral singularities} in the continuous spectrum. Around these singularities the spectral and the scattering properties of these operators are less obvious, and for that reason these operators were not considered in \cite{DR}. Here, we shall consider all the operators, and
provide as much information as possible even in the exceptional situations.

The spectral theory of the operators $H_{m,\kappa}$ and $H_0^\nu$ is provided in
Section \ref{sec_spect}. The number of eigenvalues of these operators can be finite or infinite, depending on the parameters. For the exceptional operators, it is shown in particular that even though there is no eigenvalue embedded in the continuous spectrum, it is possible to construct a family of operators of the same type (but non-exceptional) having complex eigenvalues converging to a prescribed value in $\R_+$, see Lemma \ref{lem_acumul}.
These convergences take place either in $\C_+$ or in $\C_-$ depending on the choice of the initial exceptional parameters.

The next spectral result corresponds to a limiting absorption principle. For non-exceptional operators this limiting process takes place from below and from above the real axis in $\C$, but in the exceptional cases some restrictions appear. More precisely, at the spectral singularity the limiting absorption principle holds only on one side of the real axis, the side free of possible accumulations of complex eigenvalues. These results are gathered in Propositions \ref{prop_1} and \ref{prop_2}.
Let us also note that even if most of the exceptional operators have only one spectral singularity, some have two spectral singularities (with corresponding limiting absorption principles in two different half-planes) and some have an infinite number of spectral singularities, converging both to $0$ and to $+\infty$.

The scattering theory for the pairs of operators $(H_{m,\kappa},H_{\rm D})$ or $(H_0^\nu,H_{\rm D})$ is studied in Section \ref{sec_scat}. The reference operator $H_{\rm D}$ corresponds to the Dirichlet Laplacian on $\R_+$ (and is equal to $H_{\frac{1}{2},0}$).
Following the approach of \cite{DR} we start by constructing the generalized Hankel transformations $\F_{m,\kappa}^\mp$ and $\F_0^{\nu \mp}$, and define the wave operators in terms of these transformations. Various representations are provided for these operators, but here again a special attention has to be paid to the exceptional cases. Indeed, for them either one or sometimes both wave operators are not bounded.
A list of all unbounded wave operators is provided at the end of Section \ref{sec_scat}.

In the last section we provide some index theorems in scattering theory. This part contains new information but the framework corresponds to the one which already appeared in \cite{NPR} and to part of the one used in \cite{IR}.
The first step consists in providing a representation of the wave operators in the usual setting of pseudo-differential operators. Since this new representation is implemented by a unitary transformation, the bounded wave operators remain bounded, and the unbounded ones remain unbounded ! However, this representation is convenient for the
introduction of some $C^*$-algebras containing pseudo-differential operators
of order $0$ and with coefficients which are either asymptotically constant or periodic. These algebras contain all bounded wave operators, as stated in Proposition
\ref{prop_enumeration}.

Once in this $C^*$-algebraic framework, the way for index theorems is already paved and rather well understood. Indeed, by looking at some ideals in these algebras and by considering the quotient algebras, one ends up automatically with an index map which corresponds to our topological version of Levinson's theorem. For the model under consideration and depending on the parameters, one obtains either an index theorem for Fredholm operator or a so-called \emph{Atiyah's $L^2$-index theorem} \cite{Ati}.
These results are presented in Theorem \ref{thm_1} and \ref{thm_2}.
Note that the Fredholm case has already been considered for several models in \cite{R}, and beside the usual contribution due to the scattering operator, two additional contributions are possible. In the original representation they correspond to corrections at $0$-energy and at energy equal to $+\infty$.
Note also that in the present setting we are dealing with arbitrary complex eigenvalues while in reference \cite{R} only real eigenvalues were considered.
On the other hand, when the number of bound states is infinite, no such correction appears, and the new Levinson's theorem corresponds to an equality between the winding number computed over one period for the scattering operator, and a suitable trace in the Floquet-Bloch representation of the projection on the bound states of $H_{m,\kappa}$.
This situation coincides with a special instance of the results obtained in the seminal paper \cite{Ati} where
an index theorem is provided for elliptic operators on a non-compact manifold which are invariant under the action
of a discrete group.
The decomposition with respect to the group corresponds in our setting to the Floquet-Bloch decomposition.

Unfortunately, in Section \ref{sec_index} about index theorems the exceptional cases are no more considered. Indeed, as already mentioned some of the corresponding wave operators are unbounded, and therefore can not easily be associated to any $C^*$-algebra. For the remaining wave operators still belonging to some $C^*$-algebras, their principal symbol are not boundedly invertible. As a consequence, these operators are not Fredholm, and their analytical index can not be defined. It is quite unfortunate that the presence of a spectral singularity prevented us from defining any index theorem for the corresponding wave operators. Note that a related result about the non-completeness of the wave operators in the presence of spectral singularity has also been recently obtained in \cite{FF}.
Some relations between spectral singularities and scattering theory have also been exhibited in \cite{M,S}, see also references therein.
We hope that future investigations will provide new insights about these exceptional situations in our algebraic framework.

\section{The model}\label{sec_model}

In this section we introduce the model used for our investigations.
This material is borrowed from \cite{DR} to which we refer for more explanations and for the proofs.
Note that \cite{IR,NPR} also contain partial information of this model.

For any $m \in \C$ we consider the differential expression
\begin{equation*}
L_{m^2}:=-\partial_r^2+\Bigl(m^2-\frac{1}{4}\Bigr)\frac{1}{r^2}
\end{equation*}
acting on distributions on $\R_+$.
The maximal operator associated with $L_{m^2}$ in $L^2(\R_+)$ is defined by
$\D(L_{m^2}^{\max})=\{f\in L^2(\R_+)\mid L_{m^2}f\in L^2(\R_+)\}$, and the minimal operator
$L_{m^2}^{\min}$ is defined  as the closure of the restriction of $L_{m^2}$ to $C_{c}^\infty(\R_+)$,
where $C_{c}^\infty(\R_+)$ denotes the set of compactly supported smooth functions on $\R_+$.
Then, the equality $(L_{m^2}^{\min})^*=L_{\bar{m}^2}^{\max}$ holds for any $m\in\C$,
and $L_{m^2}^{\min}=L_{m^2}^{\max}$ if $|\Re(m)|\geq 1$ while
$L_{m^2}^{\min}\subsetneq L_{m^2}^{\max}$ if $|\Re(m)|<1$.
In the latter situation $\D(L_{m^2}^{\min})$ is a closed subspace of codimension $2$ of $\D(L_{m^2}^{\max})$,
and for any $f\in\D(L_{m^2}^{\max})$ there exist $a,b\in\C$ such that
\begin{align*}
f(r)  - ar^{1/2-m} - br^{1/2+m}&\in\Dom(L_{m^2}^{\min})\hbox{
around }0  \qquad \hbox{ if } m \neq 0, \\
f(r)-ar^{1/2}\ln(r)- br^{1/2}&\in\Dom(L_{0}^{\min})\hbox{
around }0.
\end{align*}
Here, the expression $g(r)\in\D(L_{m^2}^{\min})$ \emph{around} $0$ means that there exists
$\zeta\in\ C_c^{\infty}\big([0,\infty)\big)$ with $\zeta=1$ around $0$ such that $g\zeta\in\D(L_{m^2}^{\min})$.
In addition, the behavior of any function $g\in \Dom(L_{m^2}^{\min})$ is known, namely $g\in \H^1_0(\R_+)$ and as $r\to 0$~:
\begin{align*}
g(r) & = o\big(r^{3/2}\big)\ \hbox{ and }\ g'(r)= o\big(r^{1/2}\big)  \quad \hbox{ if } m \neq 0, \\
g(r) & = o\big(r^{3/2}\ln(r)\big)\ \hbox{ and }\ g'(r)= o\big(r^{1/2}\ln(r)\big)  \quad \hbox{ if } m = 0.
\end{align*}

\begin{remark}
It is worth mentioning that for $m\neq 0$ the functions $r \mapsto r^{\frac{1}{2}\pm m}$ are the two linearly independent solutions
of the ordinary differential equation $L_{m^2}u=0$, and that they are square integrable near $0$ if $|\Re(m)|<1$.
Similarly, the functions $r\mapsto r^{\frac{1}{2}}$ and  $r\mapsto r^{\frac{1}{2}}\ln(r)$
are the two linearly independent solutions of the ordinary differential equation $L_{0}u=0$.
\end{remark}

Based on the above observations we construct various closed extensions of the operator $L_{m^2}^{\min}$.
For simplicity we restrict our attention to $m\in \C$ with $|\Re(m)|<1$. These extensions
are parameterized by a boundary condition at $0$,
namely for any $\kappa\in \C\cup \{\infty\}$ we define a
family of closed operators $H_{m,\kappa}$~:
\begin{align*}
\Dom(H_{m,\kappa}) & = \big\{f\in \Dom(L_{m^2}^{\max})\mid
\hbox{ for some }  c \in \C,\\
&\qquad f(r)- c
\big(\kappa r^{1/2-m} +
r^{1/2+m}\big)\in\Dom(L_{m^2}^{\min})\hbox{ around }
0\big\},\qquad\kappa\neq\infty;  \\
\Dom(H_{m,\infty}) & = \big\{f\in \Dom(L_{m^2}^{\max})\mid
\hbox{ for some }  c \in \C,\\
&\qquad f(r)- c
r^{1/2-m}\in\Dom(L_{m^2}^{\min})\hbox{ around } 0\big\}.
\end{align*}

For $m=0$, we introduce an additional family of closed operators $H_0^\nu$ with
$\nu\in\C\cup \{\infty\}$~:
\begin{align*}
\Dom(H_0^\nu) & = \big\{f\in \Dom(L_{0}^{\max})\mid
\hbox{ for some }  c \in \C,\\
&\qquad f(r)- c
\big( r^{1/2}\ln(r) + \nu r^{1/2}\big)\in\Dom(L_0^{\min})\hbox{ around } 0\big\},\qquad\nu\neq\infty; \\
\Dom(H_0^\infty) & = \big\{f\in \Dom(L_0^{\max})\mid
\hbox{ for some }  c \in \C,\\
&\qquad f(r)- c
r^{1/2}\in\Dom(L_0^{\min})\hbox{ around }
0\big\}.
\end{align*}

Let us directly mention a few simple properties of these operators.
For any $|\Re(m)|<1$ and any $\kappa\in \C\cup \{\infty\}$,
the equality $H_{m,\kappa}=H_{-m,\kappa^{-1}}$ holds.
For that reason, the case $\kappa=\infty$ will be disregarded in the following.
In addition, the operator $H_{0,\kappa}$ does not depend on $\kappa$,
and all these operators coincide with $H_0^\infty$ (which
has already been fully investigated in \cite{BDG}).
For that reason, all results about the case $m=0$
will be formulated in terms of the family $H_0^\nu$ for $\nu \in \C$.
It has also been proved in \cite[Prop.~2.3]{DR} that
for any $m\in \C$ with $|\Re(m)|<1$ and for any $\kappa, \nu \in \C$ one has
\begin{equation*}\label{Eq_adjoint}
(H_{m,\kappa})^*=H_{\bar m,\bar\kappa}\qquad \hbox{ and }\qquad
(H_0^\nu)^* = H_0^{\bar \nu}.
\end{equation*}
Based on this, the self-adjoint elements can easily be identified
in the two families of operators. Indeed, one has:

\begin{lemma}\label{lemma_SA}
\begin{enumerate}
\item[(i)] The operator $H_{m,\kappa}$ is self-adjoint for $m\in(-1,1)$ and $\kappa\in\R$,
and for $m\in i\R$ and $|\kappa|=1$.
\item[(ii)] The operator $H_0^\nu$ is self-adjoint for $\nu\in \R$.
\end{enumerate}
\end{lemma}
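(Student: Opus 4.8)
The plan is to read off self-adjointness directly from the adjoint formulas $(H_{m,\kappa})^*=H_{\bar m,\bar\kappa}$ and $(H_0^\nu)^*=H_0^{\bar\nu}$ recalled just above, combined with the parameter symmetry $H_{m,\kappa}=H_{-m,\kappa^{-1}}$. Since an operator is self-adjoint precisely when it coincides (as an operator, domain included) with its adjoint, proving (i) amounts to identifying the pairs $(m,\kappa)$ for which $H_{m,\kappa}=H_{\bar m,\bar\kappa}$, and proving (ii) amounts to identifying the $\nu$ for which $H_0^\nu=H_0^{\bar\nu}$.

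For the first family I would split into the two announced regimes. If $m\in(-1,1)$ and $\kappa\in\R$, then $\bar m=m$ and $\bar\kappa=\kappa$, so $(H_{m,\kappa})^*=H_{\bar m,\bar\kappa}=H_{m,\kappa}$ and self-adjointness is immediate. If instead $m\in i\R$ and $|\kappa|=1$, then $\bar m=-m$ and $\bar\kappa=\kappa^{-1}$; hence the adjoint formula gives $(H_{m,\kappa})^*=H_{-m,\kappa^{-1}}$, and the symmetry relation $H_{m,\kappa}=H_{-m,\kappa^{-1}}$ identifies this with $H_{m,\kappa}$ itself. Note that $|\Re(m)|<1$ holds throughout both regimes, so all operators involved are well defined. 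For the second family, $\nu\in\R$ gives $\bar\nu=\nu$ and therefore $(H_0^\nu)^*=H_0^{\bar\nu}=H_0^\nu$, which is exactly (ii).

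These computations establish the stated sufficient conditions with essentially no work. The only substantive point, needed if one also wants to know that the list is exhaustive, is the converse: one must show that the parametrization $(m,\kappa)\mapsto H_{m,\kappa}$ is injective up to the known identification $(m,\kappa)\sim(-m,\kappa^{-1})$, so that $H_{m,\kappa}=H_{\bar m,\bar\kappa}$ forces either $(\bar m,\bar\kappa)=(m,\kappa)$, giving $m,\kappa\in\R$, or $(\bar m,\bar\kappa)=(-m,\kappa^{-1})$, giving $m\in i\R$ and $|\kappa|=1$, and similarly $H_0^\nu=H_0^{\bar\nu}$ forces $\nu\in\R$. I expect this injectivity of the boundary-condition parametrization to be the only genuine obstacle, and it can be settled by comparing the two-term asymptotic expansions at $r=0$ that define the domains: distinct boundary data (up to the exchange $(m,\kappa)\sim(-m,\kappa^{-1})$) single out functions with different leading behavior at the origin, hence yield different domains and thus different operators.
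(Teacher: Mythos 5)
Your argument is correct and is exactly the one the paper intends: it states the adjoint formulas $(H_{m,\kappa})^*=H_{\bar m,\bar\kappa}$, $(H_0^\nu)^*=H_0^{\bar\nu}$ and the symmetry $H_{m,\kappa}=H_{-m,\kappa^{-1}}$ immediately before the lemma and presents the lemma as a direct consequence, with no further proof given. Your remarks on the converse are a reasonable bonus but are not required, since the lemma only asserts sufficiency.
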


Let us finally note that the operator $H_{\frac{1}{2},0}$ corresponds to the \emph{Dirichlet Laplacian} on $\R_+$,
which will be denoted by $H_{\rm D}$.
Later on, this operator will play the role of a comparison operator.

\section{Spectral theory}\label{sec_spect}

In this section  we start by introducing the definition of an exceptional pair $(m,\kappa)$
or of an exceptional parameter $\nu$.
We then show how these exceptional situations show off in spectral theory,
by recalling a few spectral result obtained in \cite{DR}
and by making some of them slightly more accurate.

In the sequel, we shall use the notations $z\mapsto \ln(z)$ for the principal value of the logarithm whose imaginary part lies in the interval $(-\pi, \pi]$.
On the other hand, $\Ln(z)$ will denote the multivalued logarithm. This means that $\Ln(z)=\{\ln(z)+2\pi i \Z\}$, or equivalently if $w$
satisfies $\e^{w}=z$, then $\Ln(z)$ is the set $\{w+2\pi i\Z\}$.
We also introduce for $m\in \C^*$ with $|\Re(m)|<1$ and for $\kappa\in \C$ the new parameter
\begin{equation*}\label{short}
\varsigma\equiv \varsigma(m,\kappa):=
\kappa\frac{\Gamma(-m)}{\Gamma(m)}
\end{equation*}
where $\Gamma$ denotes the usual Gamma function.

\begin{definition}\label{def_exceptional}
A pair $(m,\kappa)$ in $\C^*\times \C^*$ with $|\Re(m)|<1$ is called \emph{exceptional} if
$\pm \pi \in \Im\big(\frac1m\Ln(\varsigma)\big)$.
Similarly, a parameter $\nu\in \C$ is called \emph{exceptional} if
$\Im(\nu)=\pm \frac\pi2$.
\end{definition}

Let us immediately stress that $\pm \pi \in \Im\big(\frac1m\Ln(\varsigma)\big)$ are two independent conditions.
Indeed by setting $m= m_\r+ i m_\i \in \C^*$ with $m_\i,m_\r\in \R$ and $|m_\r|<1$ one has
\begin{align}\label{eq_alpha}
\nonumber \alpha \in \Im\Big(\frac1m\Ln(\varsigma)\Big) & \Leftrightarrow
\alpha \in \Im\Big(\frac1m\big(\ln(\varsigma)+2\pi i \Z\big)\Big) \\
\nonumber & \Leftrightarrow \alpha \in \Im\Big(\frac1m\ln(\varsigma)\Big) + 2\pi \Re\Big(\frac{1}{m}\Big)\Z \\
& \Leftrightarrow \alpha \in \Im\Big(\frac1m\ln(\varsigma)\Big) + 2\pi \frac{m_\r}{m_\r^2+m_\i^2}\Z.
\end{align}
Thus, this equation can either be satisfied for $\alpha=\pi$ or for $\alpha=-\pi$, or can be satisfied both for $\pi$ and $-\pi$.
Indeed, if $m_\r\neq 0$ both conditions are simultaneously satisfied if and only if the following system of equations
is satisfied for some $z_1,z_2 \in \Z$ with $z_1\neq z_2$~:
\begin{equation*}
\begin{cases}
& \Im\big(\frac1m\ln(\varsigma)\big)=-\pi\frac{z_1+z_2}{z_1-z_2} \\
& \frac{m_\r^2+m_\i^2}{m_\r}=z_1-z_2.
\end{cases}
\end{equation*}
On the other hand, if $m=in$ for some $n\in \R^*$ then \eqref{eq_alpha}
corresponds to $\alpha =-\big(\frac{1}{n}\ln(|\kappa|)\big)$.

Let us now recall some information about the point spectrum of the operators
$H_{m,\kappa}$ or $H_0^\nu$.

\begin{theorem}[Theorem 5.2 in \cite{DR}]\label{thm_spec}
Let $m\in \C$ with $|\Re(m)|<1$.
\begin{enumerate}
\item[(i)] For $m\neq 0$ one has $\sigma_\p(H_{m,0})=\emptyset$ while for $\kappa\in \C^*$ one has
\begin{equation*}\label{eq_align}
\sigma_{\p}(H_{m,\kappa})= \Big\{-4\e^{-w} \mid w\in
\frac{1}{m}\Ln(\varsigma) \hbox{ and } -\pi<\Im(w)<\pi\Big\}
\end{equation*}
\item[(ii)] For any $\nu \in \C$, $\sigma_\p(H_0^\nu)$ is nonempty if and only if $-\frac\pi2<\Im(\nu)<\frac\pi2$, and then
\begin{equation*}
\sigma_\p(H_0^\nu) =\big\{-4\e^{2(\nu-\gamma)}\big\}
\end{equation*}
where $\gamma$ denotes the Euler's constant.
\end{enumerate}
\end{theorem}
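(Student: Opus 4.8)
The plan is to read off the point spectrum directly from the boundary conditions defining $H_{m,\kappa}$ and $H_0^\nu$, turning the eigenvalue equation into an explicit transcendental condition on $k$, where $\lambda=-k^2$. First I would record that the essential spectrum equals $[0,\infty)$, so every eigenvalue lies in $\C\setminus[0,\infty)$; writing $\lambda=-k^2$, the map $k\mapsto-k^2$ sends the open right half-plane $\{\Re(k)>0\}$ bijectively onto $\C\setminus[0,\infty)$. A number $\lambda\notin[0,\infty)$ is an eigenvalue precisely when $L_{m^2}f=\lambda f$ has a nonzero solution that is square-integrable near $+\infty$ and satisfies the boundary condition at $0$ built into $\Dom(H_{m,\kappa})$ (resp.\ $\Dom(H_0^\nu)$). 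For $\lambda\in[0,\infty)$ the solutions decaying at infinity are oscillatory and fail to be square-integrable, so no eigenvalue is embedded in $[0,\infty)$, and all candidates indeed come from $\Re(k)>0$.

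Next I would solve the ordinary differential equation. The substitution $f(r)=\sqrt{r}\,g(kr)$ transforms $L_{m^2}f=-k^2f$ into the modified Bessel equation of index $m$, so that $\{\sqrt{r}\,I_m(kr),\ \sqrt{r}\,K_m(kr)\}$ is a fundamental system (note $m\notin\Z$ automatically, since $|\Re(m)|<1$ and $m\neq0$). For $\Re(k)>0$ only $\sqrt{r}\,K_m(kr)$ is square-integrable near $+\infty$, so any eigenfunction must be a scalar multiple of it, and the whole problem collapses to verifying the boundary condition at $0$.

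To do this I would insert the small-argument expansion of the Bessel function, which after multiplication by $\sqrt{r}$ and with $z=kr$ reads
\[
\sqrt{r}\,K_m(kr)=\tfrac12\Gamma(m)\Big(\tfrac{k}{2}\Big)^{-m}r^{1/2-m}
+\tfrac12\Gamma(-m)\Big(\tfrac{k}{2}\Big)^{m}r^{1/2+m}+o\big(r^{3/2}\big).
\]
Since a function lying in $\Dom(L_{m^2}^{\min})$ around $0$ is necessarily $o(r^{3/2})$, while for $|\Re(m)|<1$ neither $r^{1/2-m}$ nor $r^{1/2+m}$ is $o(r^{3/2})$, the coefficients $a,b$ in the boundary characterisation of $\Dom(L_{m^2}^{\max})$ are forced to equal the two explicit numbers above. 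Membership in $\Dom(H_{m,\kappa})$ then demands $a=\kappa b$, that is
\[
\frac{\Gamma(m)}{\Gamma(-m)}\Big(\frac{k}{2}\Big)^{-2m}=\kappa,
\qquad\text{equivalently}\qquad
\Big(\frac{k}{2}\Big)^{-2m}=\varsigma .
\]
Writing $k=2\e^{-w/2}$ turns this into $\e^{mw}=\varsigma$, i.e.\ $w\in\frac1m\Ln(\varsigma)$, while the constraint $\Re(k)>0$ becomes $-\pi<\Im(w)<\pi$ and $\lambda=-k^2=-4\e^{-w}$; this yields the stated set for $\sigma_\p(H_{m,\kappa})$. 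Since $\Gamma(m)(k/2)^{-m}\neq0$, the case $\kappa=0$ (which forces $a=0$) admits no eigenfunction, proving $\sigma_\p(H_{m,0})=\emptyset$.

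For $m=0$ I would argue identically using $K_0(z)=-\ln(z/2)-\gamma+o(1)$, which after multiplication by $\sqrt{r}$ produces precisely the coefficients of $r^{1/2}\ln(r)$ and $r^{1/2}$ demanded by $\Dom(H_0^\nu)$; the boundary condition becomes $\ln(k/2)+\gamma=\nu$, whence the unique candidate $k=2\e^{\nu-\gamma}$ and $\lambda=-4\e^{2(\nu-\gamma)}$, a genuine eigenvalue exactly when $\Re(k)>0$, i.e.\ when $|\Im(\nu)|<\frac\pi2$. The main obstacle is the bookkeeping at $0$: one must use the full Bessel expansion, not merely its leading term, to guarantee that after subtracting the two explicit contributions the remainder really lands in $\Dom(L_{m^2}^{\min})$, and one must track the branch of $k\mapsto\sqrt{-\lambda}$ (resp.\ of the logarithm) so that $\Re(k)>0$ translates precisely into the strict inequalities on $\Im(w)$ and $\Im(\nu)$. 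The contrast between the genuinely multivalued inversion of the power $(k/2)^{-2m}$ for $m\neq0$ and the single-valued inversion of $\ln(k/2)$ via $\exp$ for $m=0$ is exactly what makes the first point spectrum potentially infinite while the second is always a single point.
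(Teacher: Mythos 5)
Your argument is correct and follows essentially the same route as the cited source (the paper itself only quotes this as Theorem 5.2 of \cite{DR} without proof): reduce to the $L^2$ solution $\sqrt{r}\,\Ka_m(kr)$ with $\Re(k)>0$, read off the coefficients of $r^{1/2\mp m}$ (resp.\ $r^{1/2}\ln(r)$ and $r^{1/2}$) from the small-argument expansion, and translate the boundary condition into $(k/2)^{-2m}=\varsigma$ (resp.\ $\ln(k/2)+\gamma=\nu$), with $\Re(k)>0$ giving exactly the strip $-\pi<\Im(w)<\pi$ (resp.\ $|\Im(\nu)|<\tfrac{\pi}{2}$). The only point you rightly flag as needing care --- that the remainder after subtracting the two singular terms lies in $\Dom(L_{m^2}^{\min})$ around $0$, so that the coefficients $a,b$ are the ones you computed --- is guaranteed by the codimension-two description of $\Dom(L_{m^2}^{\max})$ quoted in Section 2 together with the uniqueness of $(a,b)$.
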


From this statement we can guess that $\sigma_\p(H_{m,\kappa})$ depends in a complicated way on
the parameters $m$ and $\kappa$.
There even exists a pattern of \emph{phase transitions}, when
some eigenvalues disappear in the continuous spectrum.
By looking carefully on the conditions appearing in the above statement
we can see that the exceptional situations correspond to the borderline cases,
and that the location of the eigenvalues are not arbitrary.
More precisely if we set $\C_\pm:=\{z\in \C\mid \pm \Im(z)>0\}$ then one has:

\begin{lemma}\label{lem_acumul}
\begin{enumerate}
\item[(i)] Let $(m_n)_{n\in\N}\subset \{z\in \C^*\mid |\Re(z)|<1\}$ and $(\kappa_n)_{n\in\N}\subset \C^*$
be two sequences, set
$$
\varsigma_n:=\kappa_n\frac{\Gamma(-m_n)}{\Gamma(m_n)}, \quad
a_n:=\Re\Big(\frac{1}{m_n}\ln(\varsigma_n)\Big), \quad
b_n:= \Im\Big(\frac{1}{m_n}\ln(\varsigma_n)\Big)
$$
and assume that $(a_n)_{n\in\N}$ converges to $a_\infty\in \R$ and that
$(b_n)_{n\in\N}$ is an increasing sequence converging to $\pi$.
Then, for $n$ large enough there exists $\lambda_n\in \sigma_\p(H_{m_n, \kappa_n})$ with
$\lambda_n \in \C_+$ and $\lambda_n\to 4\e^{-a_\infty}$ as $n\to \infty$.
If $(b_n)_{n\in\N}$ is a decreasing sequence converging to $-\pi$,
then for $n$ large enough $\lambda_n \in \C_-$ and $\lambda_n\to 4\e^{-a_\infty}$ as $n\to \infty$.
The value $4\e^{-a_\infty}$ is not an eigenvalue for any operator $H_{m,\kappa}$.
\item[(ii)] Let $(a_n)_{n\in \N}\subset \R$ be a sequence converging to $a_\infty\in \R$ and let
$(b_n)_{n\in\N} \subset (0,\frac{\pi}{2})$ be an increasing sequence converging to $\frac{\pi}{2}$.
Then, for any $n\in \N$ and for $\nu_n:=a_n+ib_n$ one has $\sigma_\p(H_0^{\nu_n})=\{\lambda_n\}\subset \C_-$
and $\lambda_n\to 4\e^{2(a_\infty-\gamma)}$ as $n\to \infty$.
If $(b_n)_{n\in\N} \subset (-\frac{\pi}{2},0)$ is a decreasing sequence converging to $-\frac{\pi}{2}$,
then one has $\sigma_\p(H_0^{\nu_n})=\{\lambda_n\}\subset \C_+$
and $\lambda_n\to 4\e^{2(a_\infty-\gamma)}$ as $n\to \infty$.
The value $4\e^{2(a_\infty-\gamma)}$ is not an eigenvalue for any operator $H_0^\nu$.
\end{enumerate}
\end{lemma}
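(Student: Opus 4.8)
The plan is to reduce the whole statement to the explicit description of the point spectrum given in Theorem~\ref{thm_spec}, and then to follow a single eigenvalue as the parameters approach the borderline (exceptional) configuration, namely as $b_n\to\pm\pi$ in part~(i) and as $\Im(\nu_n)\to\pm\frac\pi2$ in part~(ii). In both cases the eigenvalues are given by an explicit exponential expression, so the argument is essentially a matter of selecting the correct branch of the multivalued logarithm, letting the relevant imaginary part run to its limit, and reading off simultaneously the limit of the eigenvalue and the sign of its imaginary part.

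For part~(i), I would work with the principal branch $w_{0,n}:=\frac1{m_n}\ln(\varsigma_n)=a_n+ib_n$ among the elements of $\frac1{m_n}\Ln(\varsigma_n)$. Since $(b_n)$ increases to $\pi$, one has $b_n\in(0,\pi)\subset(-\pi,\pi)$ for $n$ large enough, so $w_{0,n}$ satisfies the constraint $-\pi<\Im(w)<\pi$ of Theorem~\ref{thm_spec}(i) and $\lambda_n:=-4\e^{-w_{0,n}}$ is a genuine eigenvalue of $H_{m_n,\kappa_n}$. Writing $\lambda_n=-4\e^{-a_n}(\cos b_n-i\sin b_n)$, its imaginary part equals $4\e^{-a_n}\sin b_n$, which is positive on $(0,\pi)$, so $\lambda_n\in\C_+$; and since $\e^{-ib_n}\to\e^{-i\pi}=-1$ one obtains $\lambda_n\to 4\e^{-a_\infty}$. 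The decreasing case $b_n\downarrow-\pi$ is entirely symmetric: now $\sin b_n<0$ forces $\lambda_n\in\C_-$, while the limit is unchanged.

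For part~(ii), I would apply Theorem~\ref{thm_spec}(ii) directly. As $b_n\in(0,\frac\pi2)\subset(-\frac\pi2,\frac\pi2)$, the point spectrum is the singleton $\{\lambda_n\}$ with $\lambda_n=-4\e^{2(\nu_n-\gamma)}=-4\e^{2(a_n-\gamma)}\big(\cos(2b_n)+i\sin(2b_n)\big)$. Here $2b_n\in(0,\pi)$, so $\Im(\lambda_n)=-4\e^{2(a_n-\gamma)}\sin(2b_n)<0$ and $\lambda_n\in\C_-$, while $\e^{2ib_n}\to\e^{i\pi}=-1$ yields $\lambda_n\to 4\e^{2(a_\infty-\gamma)}$. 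Once again the decreasing variant $b_n\downarrow-\frac\pi2$ produces $\lambda_n\in\C_+$ with the same limit.

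Finally, for the two non-eigenvalue claims I would argue directly from the same formulas: every eigenvalue equals $-4\e^{-w}$ (resp.\ $-4\e^{2(\nu-\gamma)}$) with $\Im(w)\in(-\pi,\pi)$ (resp.\ $\Im(\nu)\in(-\frac\pi2,\frac\pi2)$). For such a value to be real the factor $\e^{-i\Im(w)}$ (resp.\ $\e^{2i\Im(\nu)}$) must itself be real, which inside the admissible strip forces $\Im(w)=0$ (resp.\ $\Im(\nu)=0$); but then the eigenvalue equals $-4\e^{-\Re(w)}<0$ (resp.\ $-4\e^{2(\Re(\nu)-\gamma)}<0$), hence is strictly negative. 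Consequently no positive real number, and in particular neither $4\e^{-a_\infty}$ nor $4\e^{2(a_\infty-\gamma)}$, can be an eigenvalue. The only genuinely delicate point in the whole argument is the branch bookkeeping in part~(i): I must check that it is precisely the principal branch $w_{0,n}$ whose imaginary part approaches $\pi$ and that it stays inside the strip $(-\pi,\pi)$ for large $n$, so that the converging eigenvalue really is the one furnished by Theorem~\ref{thm_spec}(i) rather than by a shifted branch $\frac{2\pi i}{m_n}k$. Everything else reduces to continuity of the exponential and the sign of a sine.
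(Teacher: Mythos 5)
Your proposal is correct and follows exactly the route the paper takes: the paper's proof is the one-line remark that the lemma is an application of Theorem~\ref{thm_spec}, and your write-up is precisely that application, with the branch selection $w_{0,n}=a_n+ib_n$, the sign computation $\Im(\lambda_n)=4\e^{-a_n}\sin b_n$ (resp.\ $-4\e^{2(a_n-\gamma)}\sin(2b_n)$), and the observation that eigenvalues are never positive reals all checking out. No gaps.
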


\begin{proof}
The proof simply consists in an application of Theorem \ref{thm_spec}.
\end{proof}

Let us recall one more result related to eigenvalues. Later on, we shall need the following characterization
of $\#\sigma_\p(H_{m,\kappa})$, {\it i.e.}~of the  number of eigenvalues of $H_{m,\kappa}$.

\begin{proposition}[Proposition 5.3 in \cite{DR}]\label{prop_eigenv}
Let $m= m_\r+ i m_\i \in \C^*$ with $|m_\r|<1$, and let $\kappa \in \C^*$.
\begin{enumerate}
\item[(i)] If $m_\r=0$ and $\frac{\ln(|\kappa|)}{m_\i}\in (-\pi,\pi)$
then $\#\sigma_\p(H_{m,\kappa}) = \infty$,
\item[(ii)] If $m_\r=0$ and $\frac{\ln(|\kappa|)}{m_\i}\not \in (-\pi,\pi)$
then $\#\sigma_\p(H_{m,\kappa}) = 0$,
\item[(iii)]  If $m_\r\neq 0$ and if $N\in \{0,1,2,\dots\}$ satisfies
$N<\frac{m_\r^2+m_\i^2}{|m_\r|} \leq N+1$, then
\begin{equation*}
\#\sigma_\p(H_{m,\kappa})\in \{N,N+1\}.
\end{equation*}
\end{enumerate}
\end{proposition}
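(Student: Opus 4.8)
The plan is to read off everything from the explicit description of $\sigma_\p(H_{m,\kappa})$ in Theorem \ref{thm_spec}(i), thereby reducing the computation of $\#\sigma_\p(H_{m,\kappa})$ to a problem of counting integer points in a horizontal strip. First I would write the multivalued set as $\frac1m\Ln(\varsigma)=\{w_k\mid k\in\Z\}$ with $w_k:=\frac1m\big(\ln(\varsigma)+2\pi i k\big)$, and compute its imaginary parts. Writing $m=m_\r+im_\i$ and using $\frac1m=\frac{m_\r-im_\i}{m_\r^2+m_\i^2}$, a direct computation yields
\[
\Im(w_k)=b_0+\frac{2\pi m_\r}{m_\r^2+m_\i^2}\,k,\qquad
b_0:=\frac{m_\r\arg(\varsigma)-m_\i\ln|\varsigma|}{m_\r^2+m_\i^2}=\Im\Big(\tfrac1m\ln(\varsigma)\Big),
\]
so the imaginary parts of the $w_k$ form an arithmetic progression with common difference $d:=\frac{2\pi m_\r}{m_\r^2+m_\i^2}$.

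Next I would reduce $\#\sigma_\p(H_{m,\kappa})$ to the number of indices $k$ for which $w_k$ lies in the open strip $\{w\mid -\pi<\Im(w)<\pi\}$. This requires checking that the map $w\mapsto -4\e^{-w}$ is injective on that strip: if $-4\e^{-w}=-4\e^{-w'}$ then $w-w'=2\pi i\,n$ for some $n\in\Z$, and taking imaginary parts gives $2\pi n=\Im(w)-\Im(w')\in(-2\pi,2\pi)$, hence $n=0$ and $w=w'$. Since the $w_k$ are themselves pairwise distinct (as $w_k=w_{k'}$ forces $k=k'$), it follows that $\#\sigma_\p(H_{m,\kappa})=\#\{k\in\Z\mid -\pi<b_0+dk<\pi\}$.

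For the degenerate case $m_\r=0$ (items (i)–(ii)) one has $d=0$, so every $\Im(w_k)$ equals the constant $b_0$; the count is therefore $\infty$ or $0$ according to whether $b_0\in(-\pi,\pi)$ or not. It then remains to translate this dichotomy into the stated one: for $m=im_\i$ the identity $\overline{\Gamma(z)}=\Gamma(\bar z)$ gives $\big|\tfrac{\Gamma(-m)}{\Gamma(m)}\big|=1$, hence $|\varsigma|=|\kappa|$ and $b_0=-\frac{\ln|\kappa|}{m_\i}$; since the interval $(-\pi,\pi)$ is symmetric, $b_0\in(-\pi,\pi)$ is equivalent to $\frac{\ln|\kappa|}{m_\i}\in(-\pi,\pi)$, which is exactly the condition separating (i) from (ii).

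For $m_\r\neq0$ (item (iii)) one has $d\neq0$, so the admissible $k$ are precisely the integers lying in an open interval of length $\frac{2\pi}{|d|}=\frac{m_\r^2+m_\i^2}{|m_\r|}=:L$. An elementary count shows that an open interval of length $L$ always contains either $\lceil L\rceil-1$ or $\lceil L\rceil$ integers, which for $N$ defined by $N<L\le N+1$ is exactly $N$ or $N+1$, giving the claim. The one genuinely delicate point — and the reason the statement is phrased as the two-element set $\{N,N+1\}$ rather than a single value — is that the precise count depends on the fractional position of the lattice relative to the endpoints, i.e. on the phase $b_0$ (hence on $\arg(\varsigma)$), which is not determined by $L$ alone; the borderline configurations in which some $\Im(w_k)$ equals exactly $\pm\pi$ are precisely the exceptional pairs of Definition \ref{def_exceptional}, and there the strict inequality in the strip must be examined with care to decide which of the two values is attained.
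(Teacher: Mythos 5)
Your argument is correct and complete. The paper itself offers no proof of this proposition --- it is imported verbatim as Proposition 5.3 of \cite{DR} --- but your derivation from Theorem \ref{thm_spec}(i) is exactly the natural one, and its core computation (the imaginary parts of the elements of $\frac1m\Ln(\varsigma)$ form an arithmetic progression with common difference $\frac{2\pi m_\r}{m_\r^2+m_\i^2}$, which degenerates to the single value $-\ln(|\kappa|)/m_\i$ when $m_\r=0$) is precisely the paper's own display \eqref{eq_alpha} and the remark following it. The two supplementary verifications you add --- injectivity of $w\mapsto -4\e^{-w}$ on the open strip $|\Im(w)|<\pi$, and the fact that an open interval of length $L$ with $N<L\le N+1$ contains $N$ or $N+1$ integers --- are both needed and both correct. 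One small caveat on your closing sentence: the proposition only asserts membership in $\{N,N+1\}$, so no endpoint analysis is actually required to finish the proof; and in the borderline (exceptional) configurations where some $\Im(w_k)=\pm\pi$, the openness of the strip means those $w_k$ simply do not contribute, consistent with Definition \ref{def_exceptional}.
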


Let us now turn to the continuous spectrum for the operators $H_{m,\kappa}$ and $H_0^\nu$.
It has been shown in \cite{DR} that $[0,\infty)$ belongs to the spectrum of all
these operators. In addition, a limiting absorption principle has been exhibited.
Such a result corresponds to the existence of a boundary value of the resolvent on
$(0,\infty)$ when considered in some weighted Hilbert spaces.
For that purpose, we introduce for $t>0$ the weighted spaces $\H_t$ and
$\H_{-t}$ with $\H_t$ the domain of the operator $\langle R\rangle^t$ of multiplication by the function $r\mapsto \langle r \rangle^t \equiv (1+r^2)^{t/2}$
in $L^2(\R_+)$, and $\H_{-t}$ stands for its dual space.
We also recall the definition of the Bessel functions for dimension $1$ as introduced and motivated in \cite{DR}, namely
\begin{align*}
\rm{the}\ \emph{modified Bessel function for dimension $1$}&&\Ia_m(z):=\sqrt{\frac{\pi z}{2}} I_m(z),\\
\rm{the}\ \emph{MacDonald function for dimension $1$}&&\Ka_m(z):=\sqrt{\frac{2 z}{\pi}} K_m(z),\\
\rm{the}\ \emph{Bessel function for dimension $1$}&&\Ja_m(z):= \sqrt{\frac{\pi z}{2}} J_m(z),\\
\rm{the}\ \emph{Hankel function of the 1st kind for dimension $1$}&&\Ha_m^+(z):=\sqrt{\frac{\pi z}{2}} H^+_m(z), \\
\rm{the}\ \emph{Hankel function of the 2nd kind for dimension $1$}&&\Ha_m^-(z):=
\sqrt{\frac{\pi z}{2}} H^-_m(z), \\
\rm{the}\ \emph{Neumann function for dimension $1$}&&\Ya_m(z):= \sqrt{\frac{\pi z}{2}} Y_m(z),
\end{align*}
where $I_m$ is the modified Bessel function, $K_m$ is the MacDonald function,
$J_m$ is the Bessel function, $H_m^\pm$ are the Hankel function of the 1st kind and of the 2nd kind, and $Y_m$ is the Neumann function.

In the following statements, we recall the limiting absorption principle obtained in \cite{DR} and improve the statement in the exceptional situations.
The operators $H_{m,\kappa}$ and $H_0^\nu$ are considered separately, and we set
\begin{equation*}
R_{m,\kappa}(z):=(H_{m,\kappa}-z)^{-1}\qquad and \qquad
R_0^{\nu}(z):=(H_0^\nu-z)^{-1}
\end{equation*}
for their resolvents.
We also set when $\kappa\neq 0$
\begin{align}\label{Omega_1}
\nonumber \Omega_{m,\kappa}^\pm
:= \Big\{k\in\R_+ \mid & \ k^2 =  4\e^{-\Re\left(\frac{1}{m} \ln(\varsigma)\right)}\;\! \e^{2\pi \Im\left(\frac{1}{m}\right)z}
\ \hbox{for any}\ z\in \Z \ \hbox{satisfying}  \\
& \pm \pi = \Im\big(\frac1m (\ln(\varsigma)+2\pi i z)\big)\Big\}.
\end{align}
As a consequence of the observation made after Definition \ref{def_exceptional} the set $\Omega_{m,\kappa}^\pm$
is empty if $(m,\kappa)$ is not an exceptional pair, it consists of one single value if
$\pm \pi \in \Im\big(\frac1m (\ln(\varsigma)+2\pi i \Z)\big)$ and $m_\r\neq 0$, but it consists
of an infinite set if $m_\r=0$ and $\pm \pi = -\big(\frac{1}{n}\ln(|\kappa|)\big)$.

\begin{proposition}\label{prop_1}
Let $m\in \C^*$ with $|\Re(m)|<1$, let $\kappa\in \C$, and let $k>0$.
\begin{enumerate}
\item[(i)] If $(m,\kappa)$ is not an exceptional pair, then the boundary values of the resolvent
\begin{equation*}
R_{m,\kappa}(k^2\pm  i 0):=\lim_{\epsilon \searrow 0}  R_{m,\kappa}(k^2\pm  i \epsilon)
\end{equation*}
exist in the sense of operators from  $\H_t$
to $\H_{-t}$ for any $t>\frac{1}{2}$,
uniformly in $k$ on each compact subset of $\R_+$.
The kernel of $R_{m,\kappa}(k^2\pm i 0)$ is given for $0<r\leq s$ by
\begin{align*}
& R_{m,\kappa}(k^2\pm i0;r,s)\\
&= \frac{\pm i}{k\big(1-\varsigma \e^{\mp i \pi m}\big(\tfrac{k}{2}\big)^{2m}\big)}
\Big(\Ja_{m}(kr)- \varsigma\big(\tfrac{k}{2}\big)^{2m}\Ja_{-m}(kr)\Big)\Ha_m^\pm(ks)
\end{align*}
and the same expression with the role of $r$ and $s$ exchanged for $0<s<r$.
\item[(ii)] If $\pi \in \Im\big(\frac1m \Ln(\varsigma)\big)$ but $-\pi \not\in \Im\big(\frac1m\Ln(\varsigma)\big)$,
then the statement (i) holds for $R_{m,\kappa}(k^2-  i 0)$, while for $R_{m,\kappa}(k^2+i0)$ it only holds uniformly in
$k$ on each compact subset of $\R_+\setminus \Omega_{m,\kappa}^+$.
\item[(iii)] If $-\pi \in \Im\big(\frac1m \Ln(\varsigma)\big)$ but $\pi \not \in \Im\big(\frac1m\Ln(\varsigma)\big)$,
then the statement (i) holds for $R_{m,\kappa}(k^2+  i 0)$, while for $R_{m,\kappa}(k^2-i0)$ it only holds uniformly in
$k$ on each compact subset of $\R_+\setminus \Omega_{m,\kappa}^-$.
\item[(iv)] If $\pi \in \Im\big(\frac1m \Ln(\varsigma)\big)$  and
$-\pi \in \Im\big(\frac1m \Ln(\varsigma)\big)$,
then the statement (i) holds for $R_{m,\kappa}(k^2+i0)$ uniformly in $k$ on each
compact subset of $\R_+\setminus \Omega_{m,\kappa}^+$, and for $R_{m,\kappa}(k^2-i0)$ uniformly in
$k$ on each compact subset of $\R_+\setminus \Omega_{m,\kappa}^-$.
\end{enumerate}
\end{proposition}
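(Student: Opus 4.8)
The plan is to start from the explicit resolvent kernel of $H_{m,\kappa}$ off the spectrum, constructed in \cite{DR} by the Green's function recipe for a second order ordinary differential operator. For $z\notin[0,\infty)$ write $z=k^2$ with the branch of the square root fixed so that $\Im(k)>0$. The solution of $L_{m^2}u=zu$ that is square integrable at infinity is then proportional to $r\mapsto\Ha_m^+(kr)$, while the solution satisfying the boundary condition encoded by $\kappa$ at the origin is the combination $r\mapsto\Ja_m(kr)-\varsigma\big(\tfrac{k}{2}\big)^{2m}\Ja_{-m}(kr)$, as can be read off from the small-argument behaviour $\Ja_{\pm m}(kr)\propto r^{1/2\pm m}$. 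Dividing the product of these two solutions by their Wronskian yields $R_{m,\kappa}(z;r,s)$, and the reciprocal Wronskian is exactly the scalar prefactor, denominator included, appearing in the statement. Since this representation is holomorphic in $z$ on the resolvent set, the whole problem reduces to controlling its boundary behaviour as $z\to k^2\pm i0$.

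The next step is to take the boundary values of the special functions and of the denominator. The dimension-one Bessel functions $\Ja_{\pm m}$ are entire, so $\Ja_{\pm m}(kr)$ has a continuous boundary value; the only branch-dependent quantities are $\big(\tfrac{k}{2}\big)^{2m}$ and the Hankel function, and tracking the analytic continuation of $k\mapsto k^m$ and of the decaying solution across the positive real axis produces the factor $\e^{\mp i\pi m}$ together with the two functions $\Ha_m^\pm$ in the stated kernel. At this stage the candidate limit kernels are completely determined, and two things remain: (a) that they define bounded operators $\H_t\to\H_{-t}$ for $t>\tfrac12$, depending continuously on $k$, and (b) the location of the values of $k$ at which the construction breaks down.

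For (a) one invokes the asymptotics of the special functions: near the origin $\Ja_{\pm m}(kr)\propto r^{1/2\pm m}$ is square integrable since $|\Re(m)|<1$, and at infinity the Hankel functions are oscillatory with an overall $k^{-1/2}$ normalisation, so the weight $\langle R\rangle^{-t}$ with $t>\tfrac12$ absorbs the non-decay and the kernel is controlled exactly as in the non-exceptional estimates of \cite{DR}, uniformly for $k$ in any compact set on which the denominator stays bounded away from $0$. Hence the sole obstruction is the vanishing of the scalar factor $1-\varsigma\e^{\mp i\pi m}\big(\tfrac{k}{2}\big)^{2m}$, and the core of the argument is the elementary computation for (b): setting this factor to $0$, applying the multivalued logarithm and separating real and imaginary parts, one finds that $1-\varsigma\e^{\mp i\pi m}\big(\tfrac{k}{2}\big)^{2m}=0$ admits a solution $k>0$ if and only if $\pm\pi\in\Im\big(\tfrac1m\Ln(\varsigma)\big)$, and that the solutions are then precisely the points of $\Omega_{m,\kappa}^\pm$. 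This is exactly the computation recorded after Definition \ref{def_exceptional} and in \eqref{Omega_1}.

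Assembling these facts settles the four cases simultaneously. When $(m,\kappa)$ is not exceptional neither denominator vanishes, $\Omega_{m,\kappa}^\pm=\emptyset$, and the boundary values exist uniformly on every compact of $\R_+$, which is (i) together with the displayed kernel. When only the condition for $+$ (resp.\ $-$) holds, the denominator for the opposite sign never vanishes while the relevant one vanishes exactly on $\Omega_{m,\kappa}^+$ (resp.\ $\Omega_{m,\kappa}^-$), giving (ii) and (iii); when both hold one obtains (iv). The step I expect to be the main obstacle is not the uniform mapping estimate, which is essentially inherited from \cite{DR}, but rather verifying that at a point of $\Omega_{m,\kappa}^\pm$ the boundary value genuinely fails to exist rather than merely escaping the chosen representation: one must show that the contribution produced by the vanishing denominator is non-trivial, so that the operator norm from $\H_t$ to $\H_{-t}$ blows up as $k$ approaches $\Omega_{m,\kappa}^\pm$. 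This non-degeneracy is the analytic signature of the spectral singularity, and it is precisely what forbids extending the limiting absorption principle to those energies.
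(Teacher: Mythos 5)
Your proposal is correct and follows essentially the same route as the paper: the authors simply cite \cite{DR} for the construction of the resolvent kernel, its boundary values, and the mapping estimates from $\H_t$ to $\H_{-t}$, and the entire content of their proof is the chain of equivalences showing that $1-\varsigma \e^{\mp i \pi m}\big(\tfrac{k}{2}\big)^{2m}=0$ precisely when $\pm\pi\in\Im\big(\tfrac1m\Ln(\varsigma)\big)$ and $k\in\Omega_{m,\kappa}^\pm$, which is exactly the computation you single out as the core of the argument. The non-degeneracy point you raise at the end (that the limit genuinely fails at points of $\Omega_{m,\kappa}^\pm$ rather than merely escaping the representation) is not addressed in the paper's proof either; it is only recorded separately in Remark \ref{rem_sin} through the existence of the rescaled limits of $|k^2-k_o^2|\;\!R_{m,\kappa}(k^2\pm i\epsilon)$.
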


For the next statement we set
\begin{equation*}
\Omega_0^\nu:= \big\{k\in\R_+ \mid k^2 = 4\e^{2(\Re(\nu)-\gamma)}\big\}.
\end{equation*}

\begin{proposition}\label{prop_2}
Let $\nu \in \C$, and let $k>0$.
\begin{enumerate}
\item[(i)] If $\nu$ is not an exceptional parameter, then the boundary values of the resolvent
\begin{equation*}
R_0^\nu(k^2\pm i 0):=\lim_{\epsilon \searrow 0}  R_{0}^\nu(k^2\pm i \epsilon)
\end{equation*}
exist in the sense of operators from $\H_t$
to $\H_{-t}$ for any $t>\frac{1}{2}$,
uniformly in $k$ on each compact subset of $\R_+$.
The kernel of $R_{0}^\nu(k^2\pm i 0)$ is given for $0<r\leq s$ by
\begin{align*}
& R_{0}^\nu(k^2\pm i0; r,s) \\
& = \frac{\pm i}{k\big(\gamma +\ln\big(\frac{k}{2}\big)-\nu\mp i \frac{\pi}{2}\big)}
\Big(\big(\gamma+\ln\big(\tfrac{k}{2}\big)- \nu \big)\Ja_0(kr)- \tfrac{\pi}{2} \Ya_0(kr)\Big)
\Ha_0^\pm(ks).
\end{align*}
and the same expression with the role of $r$ and $s$ exchanged for $0<s<r$.
\item[(ii)] If $\Im(\nu)=\frac\pi2$, then the statement (i) holds for $R_0^{\nu}(k^2+i0)$
while for $R_0^\nu(k^2-i0)$ it only holds uniformly in
$k$ on each compact subset of $\R_+\setminus \Omega_0^\nu$.
\item[(iii)] If $\Im(\nu)=-\frac\pi2$, then the statement (i) holds for $R_0^{\nu}(k^2-i0)$
while for $R_0^\nu(k^2+i0)$ it only holds uniformly in
$k$ on each compact subset of $\R_+\setminus \Omega_0^\nu$.
\end{enumerate}
\end{proposition}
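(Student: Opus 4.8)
The plan is to build the integral kernel of $R_0^\nu(z)$ explicitly for $z$ away from $[0,\infty)$ and then to pass to the boundary values $z\to k^2\pm i0$. Writing $z=\zeta^2$ with $\Re(\zeta)>0$ and $\pm\Im(\zeta)>0$, the resolvent kernel is the Green function
\[
R_0^\nu(z;r,s)=\frac{1}{W}\,\phi(r_<)\,\psi(r_>),
\]
where $r_<=\min\{r,s\}$, $r_>=\max\{r,s\}$, the function $\phi$ solves $L_0u=zu$ and satisfies the boundary condition at $0$ defining $\Dom(H_0^\nu)$, the function $\psi$ is the solution square integrable near $+\infty$, and $W=W[\phi,\psi]$ is their Wronskian. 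Since $L_0$ has no first-order term, $W$ is constant in $r$, so locating the spectral singularities reduces to locating the real $k>0$ at which $W$ vanishes in the limit. For $\nu$ non-exceptional this reproduces the statement already obtained in \cite{DR}, so the genuinely new work concerns the real zero of $W$ in the exceptional cases.

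First I would identify the two solutions. For $\pm\Im(\zeta)>0$ the solution square integrable near $+\infty$ is proportional to $\Ha_0^\pm(\zeta s)$, owing to the decaying behaviour of $\e^{\pm i\zeta s}$; letting $\zeta\to k$ from the corresponding half-plane produces $\Ha_0^\pm(ks)$. For the solution at $0$, I would insert the small-argument expansions $\Ja_0(\zeta r)\sim\sqrt{\pi\zeta/2}\,r^{1/2}$ and $\Ya_0(\zeta r)\sim\frac{2}{\pi}\sqrt{\pi\zeta/2}\,\big(\ln(\tfrac{\zeta r}{2})+\gamma\big)r^{1/2}$ to check that
\[
\phi(r)=\big(\gamma+\ln(\tfrac{\zeta}{2})-\nu\big)\Ja_0(\zeta r)-\tfrac{\pi}{2}\,\Ya_0(\zeta r)
\]
has leading term proportional to $r^{1/2}\ln(r)+\nu\,r^{1/2}$, matching exactly the boundary condition in $\Dom(H_0^\nu)$.

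Next I would compute $W[\phi,\psi]$. Writing $\Ja_0=\tfrac12(\Ha_0^++\Ha_0^-)$ and $\Ya_0=\tfrac{1}{2i}(\Ha_0^+-\Ha_0^-)$, the function $\phi$ becomes a combination of $\Ha_0^+$ and $\Ha_0^-$ whose $\Ha_0^\mp$-coefficient is proportional to $\gamma+\ln(\tfrac{\zeta}{2})-\nu\mp i\tfrac{\pi}{2}$. Since $W[\Ha_0^\pm,\Ha_0^\pm]=0$, only that component survives in $W[\phi,\Ha_0^\pm]$, and the known Wronskian of the dimension-one Hankel functions yields, up to a nonzero constant and in the boundary limit, $W=k\big(\gamma+\ln(\tfrac{k}{2})-\nu\mp i\tfrac{\pi}{2}\big)$. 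Substitution into the Green function reproduces the two stated kernels. The denominator of $R_0^\nu(k^2+i0)$ then vanishes for some $k>0$ precisely when $\ln(k/2)=\nu-\gamma+i\tfrac{\pi}{2}$ is solvable in $\R$, that is when $\Im(\nu)=-\tfrac{\pi}{2}$ and $k^2=4\e^{2(\Re(\nu)-\gamma)}$, i.e. $k\in\Omega_0^\nu$; symmetrically, that of $R_0^\nu(k^2-i0)$ vanishes only when $\Im(\nu)=+\tfrac{\pi}{2}$, again at $k\in\Omega_0^\nu$. Hence for $\nu$ non-exceptional both denominators stay away from zero on each compact subset of $\R_+$, giving (i), while in the exceptional cases the regular boundary value remains controlled on all of $\R_+$ and the singular one on every compact avoiding $\Omega_0^\nu$, where the vanishing of $W$ produces the spectral singularity; this gives (ii) and (iii).

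The main obstacle will be the uniform weighted bound from $\H_t$ to $\H_{-t}$ for $t>\tfrac12$, rather than the algebra of the kernel. One must control the oscillatory factor $\Ha_0^\pm(ks)$ for large $s$ together with the mild singularity of $\phi$ near $r=0$, and verify that the threshold $t>\tfrac12$ --- the usual one for a limiting absorption principle --- suffices uniformly as $k$ runs over a compact set; a Schur- or Hilbert--Schmidt-type estimate on the weighted kernel should achieve this. Near $\Omega_0^\nu$ in the exceptional case the uniformity genuinely breaks down because $W\to0$, and confirming that the blow-up is confined to the single point of $\Omega_0^\nu$ is exactly what forces its excision in (ii) and (iii).
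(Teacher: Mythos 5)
Your proposal is correct, and its decisive step coincides exactly with the paper's: the entire content of parts (ii) and (iii) reduces to the observation that the prefactor $\gamma+\ln\big(\tfrac{k}{2}\big)-\nu\mp i\tfrac{\pi}{2}$ vanishes for some $k>0$ if and only if $\Im(\nu)=\mp\tfrac{\pi}{2}$ and $k^2=4\e^{2(\Re(\nu)-\gamma)}$, i.e.\ $k\in\Omega_0^\nu$, which is precisely the equivalence the paper records. Where you differ is in scope: the paper treats part (i) and the kernel formula purely by citation of \cite{DR} (Thm.~6.1 and Prop.~7.1 there), whereas you rederive the kernel from scratch via the Sturm--Liouville Green's function, checking that $\phi=\big(\gamma+\ln(\tfrac{\zeta}{2})-\nu\big)\Ja_0-\tfrac{\pi}{2}\Ya_0$ matches the boundary condition $r^{1/2}\ln(r)+\nu r^{1/2}$ of $\Dom(H_0^\nu)$, that $\Ha_0^\pm$ is the $L^2$ solution at infinity for $\pm\Im(\zeta)>0$, and that the Wronskian is proportional to $k\big(\gamma+\ln(\tfrac{k}{2})-\nu\mp i\tfrac{\pi}{2}\big)$ --- all of which I verified to be correct, including the signs pairing $\Im(\nu)=-\tfrac{\pi}{2}$ with the failure of $R_0^\nu(k^2+i0)$. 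Your route buys a self-contained derivation and makes transparent \emph{why} the zero of the denominator is a Wronskian zero (hence a genuine spectral singularity rather than an artifact of the representation); the paper's route is shorter because the heavy lifting --- in particular the uniform $\H_t\to\H_{-t}$ bounds for $t>\tfrac{1}{2}$, which you correctly flag as the main remaining technical work but only sketch via a Schur-type estimate --- is simply imported from \cite{DR}. If you wanted your version to stand alone, that weighted estimate is the one piece you would still have to write out in full; as a proof of the increment over \cite{DR}, your argument is complete and matches the paper's.
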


\begin{proof}[Proof of Propositions \ref{prop_1} \& \ref{prop_2}]
In the non exceptional situations, these statements already appeared in \cite[Thm.~6.1 \& Prop.~7.1]{DR}
while in the exceptional situations the statements above are slightly more precise than the corresponding
ones in this reference. In fact, the only difference is that a more careful analysis of some numerical prefactors
is considered here. Namely, let us consider the following equivalences:
\begin{align*}
1-\varsigma \e^{\mp i \pi m}\big(\tfrac{k}{2}\big)^{2m} = 0
& \Leftrightarrow \varsigma = \e^{m \left(\pm i \pi - \ln(\frac{k}{2})^2\right)} \\
& \Leftrightarrow \frac{1}{m} \Ln(\varsigma) \ni \pm i \pi - \ln\Big(\frac{k}{2}\Big)^2 \\
& \Leftrightarrow
\begin{cases}
\pm \pi = \Im\big(\frac{1}{m} (\ln(\varsigma)+2\pi i z_\pm)\big) \\
k^2 = 4\e^{-\Re\left(\frac{1}{m} \ln(\varsigma)\right)}\;\! \e^{2\pi \Im\left(\frac{1}{m}\right)z_\pm}
\end{cases}
\end{align*}
whenever such $z_\pm\in \Z$ exist.
Then, the convergences mentioned in the statement only hold if the factor
$\big(1-\varsigma \e^{\mp i \pi m}\big(\tfrac{k}{2}\big)^{2m}\big)^{-1}$ does not vanish,
and the previous computation explains the necessary restriction in the exceptional situations.

For the second statement, it is sufficient to observe that
\begin{align*}
\gamma +\ln\Big(\frac{k}{2}\Big)-\nu\mp i \frac{\pi}{2} = 0
\Leftrightarrow
\begin{cases}
\Im(\nu)=\mp \frac{\pi}{2} \\
k^2 =  4\e^{2(\Re(\nu)-\gamma)}
\end{cases}
\end{align*}
and the same argument allows us to conclude.
\end{proof}

Before turning our attention to scattering theory, let us add two remarks related to the above limiting absorption principle:

\begin{remark}
The information provided on the discrete spectrum and on the continuous spectrum are quite consistent. Indeed, let us compare the content
of Lemma \ref{lem_acumul} with the previous two propositions.
If $(m,\kappa)$ is an exceptional pair, the limiting absorption principle
holds without limitation in the half-plane
in $\C$ where there is no possible accumulation of eigenvalues of some operators in the same family.
On the other hand, in the half-plane where there is a possible accumulation of eigenvalues
the limiting absorption principle holds only away from these singular points.
A similar observation is also valid for the operator $H_0^\nu$
when $\nu$ is an exceptional parameter.
\end{remark}

\begin{remark}\label{rem_sin}
The elements in the sets $\Omega^\pm_{m,\kappa}$ and $\Omega_0^\nu$ correspond to spectral singularities of the operators $H_{m,\kappa}$ and $H_0^\nu$ respectively, see  \cite[Sec.~2]{Sch} and \cite[Sec.~2.3]{FF} for more information on this concept. In our setting, it means that if $k_o \in \Omega^\pm_{m,\kappa}$ then the following
limits hold uniformly in $k$ on suitable neighborhood of $k_o$, namely
$$
\lim_{\epsilon \searrow 0} |k^2-k_o^2|\;\!R_{m,\kappa}(k^2\pm i \epsilon)
$$
exist in the sense of operators from $\H_t$ to $\H_{-t}$ for $t>\frac{1}{2}$.
Similar limits also hold for $k_o \in \Omega_0^\nu$ and for the resolvent of the operator $H_0^\nu$. Note finally that if $(m,\kappa)$ is an exceptional pair and if
$\Re(m)=0$, then $H_{m,\kappa}$ has an infinite number of spectral singularities.
\end{remark}

\section{Scattering theory}\label{sec_scat}

In this section we review the scattering theory for the operators $H_{m,\kappa}$ and $H_0^\nu$
as developed in \cite{DR}. However, exceptional situations were disregarded in this reference,
so we also provide new information about the corresponding operators.

First of all, let us recall the definition of the Hankel transform.
For any $m\in \C$ with $\Re(m)>-1$  we set
$\cF_{m}:C_{\rm c}(\R_+)\to L^2(\R_+)$ with
\begin{equation*}
\big(\cF_{m} f\big)(r):=\int_0^\infty\cF_{m} (r,s)f(s)\d s
\end{equation*}
and
\begin{equation*}
\cF_{m} (r,s) := \sqrt{\frac2\pi}\Ja_m(rs).
\end{equation*}
It has been shown in \cite[Prop.~4.5]{DR} that this map extends continuously
to a bounded invertible operator in $L^2(\R_+)$, with $\cF_m^{-1}=\cF_m$.
Additional information about this operator will be provided later on.

Based on this transformation, for any $m\in \C$ with $|\Re(m)|<1$ and for any
$\kappa\in \C$ let us define the \emph{incoming} and
\emph{outgoing Hankel transformations} $\cF_{m,\kappa}^\mp$ given by
\begin{equation*}
\cF_{m,\kappa}^\mp =
\Big(\cF_m - \varsigma  \cF_{-m}\big(\tfrac{R}{2}\big)^{2m}\Big)
\frac{\e^{\mp i \frac{\pi}{2}m}}{1-\varsigma\e^{\mp i \pi m}\big(\tfrac{R}{2}\big)^{2m}}.
\end{equation*}
As already seen in the proof of Propositions \ref{prop_1}, the denominator in the last factor vanishes only if $(m,\kappa)$ is an exceptional pair. However, even in this case
the operators $\cF_{m,\kappa}^\mp$ are still well defined as unbounded operator on several natural
domains.
For example, this operator is well defined on the set $C_{\rm c}\big(\R_+\setminus\Omega_{m,\kappa}^\pm\big)$
with $\Omega_{m,\kappa}^\pm$ introduced in \eqref{Omega_1}.

In order to get a better understanding of these operators, let us provide a slightly modified presentation of them.
For that purpose, we define the unitary and self-adjoint transformation
$J:L^2(\R_+)\to L^2(\R_+)$ by the formula
\begin{equation*}
\big(J f\big)(r)=\frac{1}{r}f\Big(\frac{1}{r}\Big)
\end{equation*}
for any $f\in L^2(\R_+)$ and $r\in \R_+$.
We also denote by $A$ the generator of dilation group in $L^2(\R_+)$, namely the generator
of the unitary group $\{U_\tau\}_{\tau\in \R}$ satisfying $[U_\tau f](r)=\e^{\tau/2}f(\e^\tau r)$ for any
$\tau\in \R$, $f\in L^2(\R_+)$ and $r\in \R_+$.
Finally we introduce the bounded and continuous function $\Xi_m:\R\to \C$ defined for $t\in \R$ by
\begin{equation}\label{def_Xi}
\Xi_m(t):= \e^{i\ln(2)t} \frac{\Gamma(\frac{m+1+i t}{2})}{\Gamma(\frac{m+1-i t}{2})}\ .
\end{equation}
We can now provide a slightly generalized version of \cite[Lem.~6.3]{DR}~:

\begin{lemma}\label{lem_kappa}
For any $m\in \C^*$ with $|\Re(m)|<1$ and any $\kappa\in \C$ the following equality holds:
\begin{equation}\label{eq_departure}
\cF_{m,\kappa}^\mp =  J \Big(\Xi_m(A)-\varsigma\Xi_{-m}(A) \big(\tfrac{R}{2}\big)^{2m}\Big)
\frac{\e^{\mp i \frac{\pi}{2}m}}{1-\varsigma \e^{\mp i \pi m} \big(\tfrac{R}{2}\big)^{2m}}.
\end{equation}
If $(m,\kappa)$ is not an exceptional pair, then this equality holds between bounded operators,
while if $(m,\kappa)$ is an exceptional pair, then:
\begin{enumerate}
\item[(i)] If $\pi \in \Im\big(\frac1m \Ln(\varsigma)\big)$ but $-\pi \not\in \Im\big(\frac1m\Ln(\varsigma)\big)$,
then $\cF_{m,\kappa}^+$ extends to a bounded operator while the above equality for $\cF_{m,\kappa}^-$
holds on $C_{\rm c}\big(\R_+\setminus\Omega_{m,\kappa}^+\big)$.
\item[(ii)] If $-\pi \in \Im\big(\frac1m \Ln(\varsigma)\big)$ but $\pi \not \in \Im\big(\frac1m\Ln(\varsigma)\big)$,
then $\cF_{m,\kappa}^-$ extends to a bounded operator while the above equality for $\cF_{m,\kappa}^+$
holds on $C_{\rm c}\big(\R_+\setminus\Omega_{m,\kappa}^-\big)$.
\item[(iii)] If $\pi \in \Im\big(\frac1m \Ln(\varsigma)\big)$ and
$-\pi \in \Im\big(\frac1m \Ln(\varsigma)\big)$,
the above equality holds for $\cF_{m,\kappa}^-$ on $C_{\rm c}\big(\R_+\setminus\Omega_{m,\kappa}^+\big)$
and for $\cF_{m,\kappa}^+$ on $C_{\rm c}\big(\R_+\setminus\Omega_{m,\kappa}^-\big)$.
\end{enumerate}
\end{lemma}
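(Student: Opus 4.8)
The plan is to reduce the whole statement to the single operator identity
\begin{equation}\label{eq_star}
\cF_m = J\,\Xi_m(A),
\end{equation}
valid for every $m\in\C$ with $|\Re(m)|<1$ (and, applied to $-m$, giving $\cF_{-m}=J\,\Xi_{-m}(A)$). This is the substance of \cite[Lem.~6.3]{DR}, and I would recover it in the Mellin representation: the Mellin transform diagonalises $A$ as multiplication by the variable $t$, the operator $J$ acts as the reflection $t\mapsto-t$, and the Mellin transform of the one-dimensional Bessel kernel $\sqrt{rs}\,J_m(rs)$ of $\cF_m$ produces exactly the multiplier $\Xi_m$ of \eqref{def_Xi}; the functional equation $\Xi_m(t)\,\Xi_m(-t)=1$ then converts $J\cF_m$ into $\Xi_m(A)$. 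Granting \eqref{eq_star}, I substitute $\cF_{\pm m}=J\,\Xi_{\pm m}(A)$ into the definition of $\cF_{m,\kappa}^\mp$ and factor out the common right-hand multiplier. Since $J$ sits to the far left of every term, no commutation between $J$ and $(R/2)^{2m}$ is required, and \eqref{eq_departure} is obtained as a purely algebraic rearrangement --- an identity which holds on any domain on which all its factors are defined.

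The real work is then to determine, case by case, the largest natural domain of the rightmost factor and whether the product extends to a bounded operator. Setting
\begin{equation*}
M_1^\mp := \frac{\e^{\mp i\frac{\pi}{2}m}}{1-\varsigma\,\e^{\mp i\pi m}(R/2)^{2m}},
\qquad
M_2^\mp := (R/2)^{2m}\,M_1^\mp,
\end{equation*}
we have $\cF_{m,\kappa}^\mp = J\,\Xi_m(A)\,M_1^\mp-\varsigma\,J\,\Xi_{-m}(A)\,M_2^\mp$. As $J$ is unitary and $\Xi_{\pm m}(A)$ are bounded (the functions $\Xi_{\pm m}$ being bounded and continuous), the boundedness of $\cF_{m,\kappa}^\mp$ reduces to that of the multiplication functions $M_1^\mp$ and $M_2^\mp$. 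Although $(R/2)^{2m}$ is itself unbounded whenever $\Re(m)\neq0$, both $M_1^\mp$ and $M_2^\mp$ are bounded as soon as the denominator $1-\varsigma\,\e^{\mp i\pi m}(r/2)^{2m}$ is bounded away from $0$ on $\R_+$: at the end of $\R_+$ where $(r/2)^{2m}\to\infty$ one finds $M_1^\mp\to0$ and $M_2^\mp$ tending to a finite constant, while at the opposite end the roles are exchanged.

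Next I would locate the zeros of the denominator by the computation already carried out in the proof of Proposition \ref{prop_1}: $1-\varsigma\,\e^{\mp i\pi m}(k/2)^{2m}=0$ for some $k>0$ precisely when $\pm\pi\in\Im\big(\tfrac1m\Ln(\varsigma)\big)$, the set of such $k$ being exactly $\Omega_{m,\kappa}^\pm$ from \eqref{Omega_1}. Thus the denominator of $\cF_{m,\kappa}^-$ vanishes (on $\Omega_{m,\kappa}^+$) iff $\pi\in\Im\big(\tfrac1m\Ln(\varsigma)\big)$, and that of $\cF_{m,\kappa}^+$ vanishes (on $\Omega_{m,\kappa}^-$) iff $-\pi\in\Im\big(\tfrac1m\Ln(\varsigma)\big)$. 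Feeding the three exceptional regimes of Definition \ref{def_exceptional} into this dichotomy yields the statement: in case (i) only $\cF_{m,\kappa}^-$ carries poles, located on $\Omega_{m,\kappa}^+$, so $\cF_{m,\kappa}^+$ extends boundedly while the identity for $\cF_{m,\kappa}^-$ survives on $C_{\rm c}(\R_+\setminus\Omega_{m,\kappa}^+)$, where multiplication by the singular multiplier still returns a genuine element of $C_{\rm c}(\R_+)$ to which the bounded operators $J\,\Xi_{\pm m}(A)$ then apply; cases (ii) and (iii) are obtained symmetrically.

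The step I expect to be the main obstacle is the \emph{uniform} lower bound on $|1-\varsigma\,\e^{\mp i\pi m}(r/2)^{2m}|$ in the non-exceptional regime --- nonvanishing is not enough, one needs boundedness away from $0$ over all of $\R_+$ --- and this forces a split according to $\Re(m)$. For $\Re(m)\neq0$ the curve $r\mapsto\varsigma\,\e^{\mp i\pi m}(r/2)^{2m}$ is a logarithmic spiral whose modulus is strictly monotone in $\ln r$, hence it meets the unit circle exactly once, and the distance from that point to $1$ stays positive unless the exceptional phase condition holds. For $\Re(m)=0$ the curve is instead a circle traversed infinitely often as $r$ ranges over $\R_+$; it avoids $1$ uniformly unless its radius equals $1$, which is exactly the condition $\pm\pi=-\tfrac{1}{\Im(m)}\ln(|\kappa|)$, and then it passes through $1$ at the infinitely many points constituting $\Omega_{m,\kappa}^\pm$ --- the source of the infinitely many spectral singularities recorded in Remark \ref{rem_sin}.
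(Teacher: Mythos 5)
Your proposal is correct and takes essentially the same route as the paper, whose proof simply invokes \cite[Lem.~6.3]{DR} for the identity $\cF_m=J\,\Xi_m(A)$ and the resulting bounded non-exceptional case, and then notes that the same algebraic rearrangement persists in the exceptional cases once the zeros of the denominator $1-\varsigma\e^{\mp i\pi m}(r/2)^{2m}$ --- located exactly as in the proof of Proposition \ref{prop_1} on the sets $\Omega_{m,\kappa}^{\pm}$ --- are excised from the domain. Your sign bookkeeping and the dichotomy between $\Re(m)\neq 0$ (finitely many zeros, finite limits of the multipliers at both ends of $\R_+$) and $\Re(m)=0$ (a circle of radius $|\varsigma|\e^{\pm\pi n}$, hence infinitely many singular points when exceptional) agree with the paper's conventions, so you have merely supplied the details the paper delegates to \cite{DR}.
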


Similarly, for any $\nu \in \C$ we define the \emph{incoming} and \emph{outgoing Hankel transformations} $\cF_0^{\nu \mp}$ given by
the kernels for $r,s\in \R_+$~:
\begin{align*}
\cF_0^{\nu \mp}(r,s) & :=\sqrt{\frac2\pi}
\Big(\Ja_0(rs) \pm \frac{i \frac{\pi}{2}}{\gamma+ \ln\big(\frac{s}{2}\big)-\nu \mp i \frac{\pi}{2}} \;\!\Ha_0^\pm(rs)\Big)\\
& = \sqrt{\frac2\pi}
\bigg(\frac{\big(\gamma+ \ln\big(\frac{s}{2}\big)-\nu \big)\Ja_0(rs)- \frac{\pi}{2} \Ya_0(rs)}{\gamma + \ln\big(\frac{s}{2}\big)-\nu \mp i \frac{\pi}{2}}\bigg).
\end{align*}
However, a better understanding of these transformations can be obtained with the subsequent formulas:

\begin{lemma}\label{lem_nu}
For any $\nu \in \C$ the following alternative description of $\cF_0^{\nu \mp}$ hold:
\begin{equation}\label{eq_departure_2}
\cF_0^{\nu \mp} = J \Xi_0(A) \Big(\gamma +\ln\big(\tfrac{R}{2}\big)-\nu -i \tfrac{\pi}{2} \tanh\big(\tfrac{\pi}{2}A\big)\Big)
\frac{1}{\gamma + \ln\big(\frac{R}{2}\big)-\nu \mp i \frac{\pi}{2}}.
\end{equation}
If $\nu$ is not an exceptional parameter, then this equality holds between bounded operators,
while if $\nu$ is an exceptional pair, then:
\begin{enumerate}
\item[(i)] If $\Im(\nu)=\frac\pi2$, then $\cF_0^{\nu -}$ extends to a bounded operator while the above equality for $\cF_0^{\nu +}$
holds on $C_{\rm c}\big(\R_+\setminus\Omega_0^\nu\big)$.
\item[(ii)] If $\Im(\nu)=-\frac\pi2$, then $\cF_0^{\nu +}$ extends to a bounded operator while the above equality for $\cF_0^{\nu -}$
holds on $C_{\rm c}\big(\R_+\setminus\Omega_0^\nu\big)$.
\end{enumerate}
\end{lemma}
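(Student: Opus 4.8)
The plan is to reduce the $m=0$ case to the structure already present in Lemma \ref{lem_kappa}, the starting point being the identity $\cF_m = J\,\Xi_m(A)$, valid for $m\in\C^*$ with $|\Re(m)|<1$. This is the special case $\kappa=0$ of Lemma \ref{lem_kappa} (for which $\varsigma=0$, so that $\cF_{m,0}^\mp=\cF_m\,\e^{\mp i\frac\pi2 m}=J\Xi_m(A)\,\e^{\mp i\frac\pi2 m}$), and it extends to $m=0$ by continuity since $m\mapsto\Xi_m$ is analytic near $0$. As the value $m=0$ is degenerate --- the two indicial solutions $r^{1/2\pm m}$ coalesce and the Neumann function $\Ya_0$ enters through the logarithmic solution --- the idea is to produce the $\Ya_0$-contribution by differentiating $\cF_m=J\,\Xi_m(A)$ in $m$ at $m=0$.

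First I would record the classical identity $\Ya_0=\tfrac2\pi\,\partial_m\Ja_m\big|_{m=0}$, coming from the representation of the Neumann function as the $m$-derivative of $J_m$ and surviving the dimension-$1$ normalization because the prefactor $\sqrt{\pi z/2}$ is independent of $m$. Differentiating the kernel identity $\cF_m(r,s)=[J\,\Xi_m(A)](r,s)$ at $m=0$ then shows that the integral operator with kernel $\sqrt{2/\pi}\,\Ya_0(rs)$ equals $\tfrac2\pi\,J\big(\partial_m\Xi_m|_{m=0}\big)(A)$; interchanging $\partial_m$ with the bounded functional calculus of $A$ is legitimate because $m\mapsto\Xi_m(A)$ is an analytic family of bounded multipliers.

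Next I would split the defining kernel of $\cF_0^{\nu\mp}$ into its $\Ja_0$- and $\Ya_0$-parts. Writing $G:=\gamma+\ln(\tfrac R2)-\nu$ and $H^\mp:=\gamma+\ln(\tfrac R2)-\nu\mp i\tfrac\pi2$ for the corresponding (commuting) multiplication operators, which act in the integration variable and hence to the right, the first integral is $\cF_0\,G\,(H^\mp)^{-1}=J\,\Xi_0(A)\,G\,(H^\mp)^{-1}$, while the $\Ya_0$-integral contributes $-J\big(\partial_m\Xi_m|_{m=0}\big)(A)\,(H^\mp)^{-1}$ (the sign inherited from the numerator $\cdots-\tfrac\pi2\Ya_0$), whence
\begin{equation*}
\cF_0^{\nu\mp}=J\Big(\Xi_0(A)\,G-\big(\partial_m\Xi_m|_{m=0}\big)(A)\Big)(H^\mp)^{-1}.
\end{equation*}
It then remains to identify the derivative term, i.e.\ to prove $\big(\partial_m\Xi_m|_{m=0}\big)(t)=i\tfrac\pi2\,\Xi_0(t)\tanh(\tfrac\pi2 t)$. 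The logarithmic derivative of \eqref{def_Xi} gives $\partial_m\Xi_m(t)/\Xi_m(t)=\tfrac12\psi(\tfrac{m+1+it}2)-\tfrac12\psi(\tfrac{m+1-it}2)$, with $\psi=\Gamma'/\Gamma$ the digamma function, and at $m=0$ the reflection formula $\psi(1-z)-\psi(z)=\pi\cot(\pi z)$ with $z=\tfrac{1-it}2$ collapses the right-hand side to $\tfrac12\pi\cot(\tfrac\pi2-i\tfrac{\pi t}2)=\tfrac\pi2\tan(i\tfrac{\pi t}2)=i\tfrac\pi2\tanh(\tfrac{\pi t}2)$. Substituting this back produces exactly \eqref{eq_departure_2}.

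Finally, for the boundedness and the exceptional cases I would note that $J$, $\Xi_0(A)$ and $\tanh(\tfrac\pi2 A)$ are bounded and that $G\,(H^\mp)^{-1}$ is bounded because the symbol $(\gamma+\ln(\tfrac s2)-\nu)/(\gamma+\ln(\tfrac s2)-\nu\mp i\tfrac\pi2)$ tends to $1$ as $s\to0$ and as $s\to\infty$; hence boundedness of the whole product hinges only on $(H^\mp)^{-1}$, that is on whether $\gamma+\ln(\tfrac s2)-\nu\mp i\tfrac\pi2$ vanishes for some $s>0$. As $\ln(\tfrac s2)$ is real this happens precisely when $\Im(\nu)=\mp\tfrac\pi2$, and then only at $s=2\e^{\Re(\nu)-\gamma}$, i.e.\ on $\Omega_0^\nu$. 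This yields all cases at once: off the exceptional locus both transformations are bounded; if $\Im(\nu)=\tfrac\pi2$ only $\cF_0^{\nu+}$ carries the singular factor, so $\cF_0^{\nu-}$ is bounded while the identity for $\cF_0^{\nu+}$ holds on $C_{\rm c}(\R_+\setminus\Omega_0^\nu)$; and if $\Im(\nu)=-\tfrac\pi2$ the roles are exchanged. I expect the main obstacle to be the operator-theoretic justification of the $\Ya_0$-step --- that differentiating the kernel identity in $m$ truly yields the operator identity $\tfrac2\pi J(\partial_m\Xi_m|_{m=0})(A)$ for the Neumann operator, commuting $\partial_m$ past the functional calculus of $A$ --- whereas the remaining digamma manipulation is a short closed-form computation.
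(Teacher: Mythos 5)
Your argument is correct, but it takes a genuinely different route from the paper's. The paper does not rederive \eqref{eq_departure_2} at all: its joint proof of Lemmas \ref{lem_kappa} and \ref{lem_nu} simply cites \cite[Lem.~6.3 \& Corol.~7.6]{DR} for the identity in the non-exceptional case and then observes that the same arguments persist in the exceptional case once the singular multiplication operator $\big(\gamma+\ln(\tfrac{R}{2})-\nu\mp i\tfrac{\pi}{2}\big)^{-1}$ is given the domain $C_{\rm c}\big(\R_+\setminus\Omega_0^\nu\big)$. You instead reconstruct the formula from the generic identity $\cF_m=J\Xi_m(A)$ (the $\varsigma=0$ case of Lemma \ref{lem_kappa}) by differentiating in $m$ at $m=0$, using $\Ya_0=\tfrac{2}{\pi}\partial_m\Ja_m|_{m=0}$ and the digamma reflection formula to get $\partial_m\Xi_m|_{m=0}(t)=i\tfrac{\pi}{2}\Xi_0(t)\tanh(\tfrac{\pi}{2}t)$; these computations check out, and they explain conceptually why the $\tanh(\tfrac{\pi}{2}A)$ term replaces the $(\tfrac{R}{2})^{2m}$ term when the two indicial solutions coalesce. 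What your approach buys is a self-contained derivation; what it costs is exactly the step you flag, namely commuting $\partial_m$ with the kernel/functional-calculus identity --- this is harmless (test $\cF_mf=J\Xi_m(A)f$ against $f\in C_{\rm c}(\R_+)$, where both sides are pointwise analytic in $m$, locally uniformly), but it is work the paper sidesteps by citation. Two minor points on your last paragraph: the claim that $G(H^\mp)^{-1}$ is bounded because its symbol tends to $1$ at $0$ and $\infty$ overlooks interior zeros of the denominator --- since $G(H^\mp)^{-1}=1\pm i\tfrac{\pi}{2}(H^\mp)^{-1}$ it is singular exactly when $(H^\mp)^{-1}$ is --- though your final criterion (vanishing of $\gamma+\ln(\tfrac{s}{2})-\nu\mp i\tfrac{\pi}{2}$, i.e.\ $\Im(\nu)=\mp\tfrac{\pi}{2}$ and $s\in\Omega_0^\nu$) is the correct one and yields the right case split (i)--(ii); and, consistently with Remark \ref{rem_1}, you rightly refrain from asserting unboundedness of the sum and only claim the identity on $C_{\rm c}\big(\R_+\setminus\Omega_0^\nu\big)$, which is precisely the paper's (deliberately cautious) statement.
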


\begin{proof}[Proof of Lemmas \ref{lem_kappa} \& \ref{lem_nu}]
In the non exceptional cases, these statements and their proofs already appeared in \cite[Lem.~6.3 \& Corol.~7.6]{DR}.
It is then enough to observe that the same arguments hold in the exceptional cases, when the possible singularities
of the multiplication operators are taken into account by choosing suitable domains for these operators.
\end{proof}

Before introducing the wave operators, let us observe that the operators $\cF_{\frac{1}{2},0}^\mp$ take a very explicit form.
Indeed, as shown in \cite[Sec.~4.7]{DR} one has
\begin{equation*}
\cF_{\rm D}^\mp \equiv \cF_{\frac{1}{2},0}^\mp
= \e^{\mp i \frac{\pi}{4}} \Xi_{\frac{1}{2}}(-A)J
= \e^{\mp i \frac{\pi}{4}}\cF_{\rm D}
\end{equation*}
with
$$
\big(\cF_{\rm D} f\big)(r):=\sqrt{\frac2\pi}\int_0^\infty \sin(rs) f(s)\;\!\d s, \qquad f\in L^2(\R_+).
$$
This operator is clearly unitary.

We can now introduce the wave operators for the pairs $(H_{m,\kappa},H_{\rm D})$
or $(H_0^\nu,H_{\rm D})$, where $H_{\rm D}\equiv H_{\frac{1}{2},0}$ denotes the Dirichlet Laplacian on $\R_+$.
Note that $H_{\rm D}$ for the reference operator is chosen for simplicity, but other choices
are possible and lead to interesting phenomena, as emphasized in \cite{IR}.
In \cite{DR} the wave operators are defined by the formulas
\begin{equation*}
W_{m,\kappa}^\mp\equiv W^\mp(H_{m,\kappa},H_{\rm D}) := \cF_{m,\kappa}^{\mp}\;\! \cF_{\rm D}^{\pm}
\end{equation*}
and
\begin{equation*}
W_0^{\nu \mp} \equiv W^\mp(H_0^\nu,H_{\rm D}):= \cF_{0}^{\nu \mp}\;\! \cF_{\rm D}^{\pm}.
\end{equation*}
However, since some of these operators are unbounded in the exceptional cases, we shall use a unitarily equivalent definition
for these operators, namely we shall consider $\cF_{\rm D}^{\pm}\;\!W^\mp \;\!(\cF_{\rm D}^{\pm})^{-1}$, or more precisely
\begin{equation*}
\begin{split}
W_{m,\kappa}^\mp &:= \cF_{\rm D}^{\pm}\;\!\cF_{m,\kappa}^{\mp} \\
& = \Xi_{\frac{1}{2}}(-A) \Big(\Xi_m(A)-\varsigma\Xi_{-m}(A) \big(\tfrac{R}{2}\big)^{2m}\Big)
\frac{\e^{\mp i \frac{\pi}{2}(m-\frac{1}{2})}}{1-\varsigma \e^{\mp i \pi m} \big(\tfrac{R}{2}\big)^{2m}}
\end{split}
\end{equation*}
and
\begin{equation*}
\begin{split}
W_0^{\nu \mp} &:= \cF_{\rm D}^{\pm}\;\!\cF_{0}^{\nu \mp} \\
& =\Xi_{\frac{1}{2}}(-A) \Xi_0(A) \Big(\gamma +\ln\big(\tfrac{R}{2}\big)-\nu -i \tfrac{\pi}{2} \tanh\big(\tfrac{\pi}{2}A\big)\Big)
\frac{\e^{\pm i \frac{\pi}{4}}}{\gamma + \ln\big(\frac{R}{2}\big)-\nu \mp i \frac{\pi}{2}}.
\end{split}
\end{equation*}

As a direct consequence of Lemmas \ref{lem_kappa} and \ref{lem_nu} most of these operators
are bounded and thus well defined on $L^2(\R_+)$. However, in the exceptional cases
some of the operators $\cF_{m,\kappa}^{\pm}$ are not bounded, and thus were only defined on suitable domains.
For that reason the corresponding wave operators are also only defined on the same domains.
For completeness, let us enumerate the operators which are unbounded, and consequently which require a special
attention for their definition:
\begin{enumerate}
\item[(i)] If $\pi \in \Im\big(\frac1m \Ln(\varsigma)\big)$ but $-\pi \not\in \Im\big(\frac1m\Ln(\varsigma)\big)$,
then $W_{m,\kappa}^-$ is unbounded.
\item[(ii)] If $-\pi \in \Im\big(\frac1m \Ln(\varsigma)\big)$ but $\pi \not \in \Im\big(\frac1m\Ln(\varsigma)\big)$,
then $W_{m,\kappa}^+$ is unbounded.
\item[(iii)] If $\pi \in \Im\big(\frac1m (\Ln(\varsigma)\big)$ and
$-\pi \in \Im\big(\frac1m \Ln(\varsigma)\big)$, then
both operators $W_{m,\kappa}^\pm$ are unbounded.
\item[(iv)] If $\Im(\nu)=\frac\pi2$, then $W_0^{\nu +}$ is unbounded.
\item[(v)] If $\Im(\nu)=-\frac\pi2$, then $W_0^{\nu -}$ is unbounded.
\end{enumerate}
Except the operators appearing in the above list, all wave operators are bounded.

\begin{remark}\label{rem_1}
In the previous two lemmas and in the above statement we have been rather pessimistic, and there is a tiny
chance that the situation is slightly better than described. Indeed, if one looks carefully
at the expressions provided in \eqref{eq_departure} and \eqref{eq_departure_2} the operator $\cF_{m,\kappa}^\mp$
or $\cF_0^{\nu \mp}$ consist in a product of two types of operators, namely
some functions of $A$ and some multiplication operators (functions of $R$).
For all parameters $m$, $\kappa$ and $\nu$ the functions of $A$ are bounded.
On the other hand, depending on the parameters $m$, $\kappa$ and $\nu$ the multiplication operators
are either bounded or not. In the latter case, the operators $W_{m,\kappa}^\mp$ or $W_0^{\nu \mp}$
can be defined on a natural domain for the unbounded multiplication operators,
but it is not clear if this domain can be extended due to some cancellations with the 
other factors. In the above statements, we took the precautious attitude of not expecting
any improvement, and for that reason we mentioned that some wave operators are unbounded. 
It would certainly be interesting to further investigate in this direction and get
a better description of the maximal domain of these operators and of their range.
So far, our attempts have not been successful. 
\end{remark}

\section{Index theorems}\label{sec_index}

In this section we provide the algebraic framework which leads to a topological
version of Levinson's theorem. This framework for the current model already appeared in \cite{IR,NPR}, but we extend the results presented in these references in three directions. Indeed, we shall consider systems with arbitrary eigenvalues (complex or real) and in arbitrary number (finite or infinite). In the former reference, only real eigenvalues were considered (which means only self-adjoint operators $H_{m,\kappa}$ were studied), and in the latter only a finite number of complex eigenvalues were considered. In addition, we also provide an index theorem for the pair $(H_0^\nu, H_{\rm D})$ which has never been exhibited before.

Our first task is to provide a more familiar but unitarily equivalent representation of the wave operators. Indeed, since the operators $A$ and $B:=\ln\big(\frac{R}{2}\big)$ in $L^2(\R_+)$ satisfy the Weyl commutation relation, they are unitarily equivalent to the operators $D=-i\partial_x$ and $X$ in $L^2(\R)$. This equivalence is essentially implemented by a Mellin transform. Through this transformation the wave operators introduced in the previous section are given by the following expressions:
\begin{equation}\label{eq_11}
\W_{m,\kappa}^\mp := \Xi_{\frac{1}{2}}(-D) \Big(\Xi_m(D)-\varsigma\Xi_{-m}(D) \e^{2mX}\Big)
\frac{\e^{\mp i \frac{\pi}{2}(m-\frac{1}{2})}}{1-\varsigma \e^{\mp i \pi m} \e^{2mX}}
\end{equation}
and
\begin{equation}\label{eq_12}
\W_0^{\nu \mp} := \Xi_{\frac{1}{2}}(-D) \Xi_0(D) \Big(\gamma +X-\nu -i \tfrac{\pi}{2} \tanh\big(\tfrac{\pi}{2}D\big)\Big)
\frac{\e^{\pm i \frac{\pi}{4}}}{\gamma + X-\nu \mp i \frac{\pi}{2}}.
\end{equation}
This representation is more familiar since these operators correspond now to
pseudo-differential operators.

Our second task is to observe that these operators are made of functions of $D$ and
$X$ which have precise properties.
For that purpose, let us set when $\kappa\neq 0$
\begin{align*}
\nonumber \Lambda_{m,\kappa}^\pm
:= \Big\{x\in\R \mid & \ x =  -\frac{1}{2}\Big(\Re\big(\frac{1}{m} \ln(\varsigma)\big) -2\pi \Im\big(\frac{1}{m}\big)z\Big)
\ \hbox{for any}\ z\in \Z \ \hbox{satisfying}  \\
& \pm \pi = \Im\big(\frac1m (\ln(\varsigma)+2\pi i z)\big)\Big\}
\end{align*}
and $\Lambda_0^\nu :=\emptyset$ if $\Im(\nu)\neq \pm \frac{\pi}{2}$  while if $\Im(\nu)=\pm \frac{\pi}{2}$
$$
\Lambda_0^\nu:= \big\{x\in\R \mid x = \Re(\nu)-\gamma\big\}.
$$
These sets are the counterparts of $\Omega_{m,\kappa}^\pm$ and $\Omega_0^\nu$
in the new representation.
We then define the functions of two variables:
$\Gamma_{m,\kappa}^\mp : \R\setminus \Lambda_{m,\kappa}^{\pm} \times \R\to \C$ and $\Gamma_0^{\nu \mp}:\R\setminus \Lambda_0^\nu \times \R\to \C$ by
\begin{equation*}
\Gamma_{m,\kappa}^\mp(x,\xi) := \Xi_{\frac{1}{2}}(-\xi) \Big(\Xi_m(\xi)-\varsigma\Xi_{-m}(\xi) \e^{2mx}\Big)
\frac{\e^{\mp i \frac{\pi}{2}(m-\frac{1}{2})}}{1-\varsigma \e^{\mp i \pi m} \e^{2mx}}
\end{equation*}
and
\begin{equation*}
\Gamma_0^{\nu \mp} (x,\xi):= \Xi_{\frac{1}{2}}(-\xi) \Xi_0(\xi) \Big(\gamma +x-\nu -i \tfrac{\pi}{2} \tanh\big(\tfrac{\pi}{2}\xi\big)\Big)
\frac{\e^{\pm i \frac{\pi}{4}}}{\gamma + x-\nu \mp i \frac{\pi}{2}}.
\end{equation*}
Formally the following equalities hold:
$$
\W_{m,\kappa}^\mp = \Gamma_{m,\kappa}^\mp(X,D)\quad \hbox{ and }\quad
\W_0^{\nu \mp} = \Gamma_0^{\nu \mp} (X,D),
$$
but the only precise meaning is the one provided in \eqref{eq_11} and \eqref{eq_12}.

Let us now introduce the commutative algebra $C\big([-\infty,\infty]\big)$ of continuous functions on $\R$ having limits at $\pm \infty$.
We then recall from the proof of \cite[Thm.~4.10]{DR} that for any $m,m'\in \C$ with $\Re(m)>-1$ and $\Re(m')>-1$
the map $\xi\mapsto \Xi_m(-\xi)\Xi_{m'}(\xi)$ belongs to $C\big([-\infty,\infty]\big)$
and that the following equalities hold:
\begin{equation*}
\Xi_m(\mp \infty)\Xi_{m'}(\pm \infty)=\e^{\mp i \frac{\pi}{2}(m-m')}.
\end{equation*}
We also introduce two non-commutative algebras which are going to nest the wave operators $\W_{m,\kappa}^\mp$ and $\W_0^{\nu \mp}$.
Firstly, we consider the unital $C^*$-subalgebra of $\B\big(L^2(\R)\big)$
$$
\E_o=C^*\Big(a(D)b(X)\mid a\in C\big([-\infty,\infty]\big), b\in C\big([-\infty,+\infty]\big) \Big).
$$
Secondly, for any $n>0$ we introduce the unital $C^*$-subalgebra of $\B\big(L^2(\R)\big)$
\begin{equation*}
\E_n:=C^*\Big(a(D)b(X)\mid a\in C\big([-\infty,+\infty]\big),b\in C_{\frac{\pi}{n}}(\R) \Big),
\end{equation*}
where $C_{\frac{\pi}{n}}(\R)$ denotes the set of all continuous periodic functions on $\R$ with period $\frac{\pi}{n}$.

Based on a careful analysis of the functions $\Gamma_{m,\kappa}^\mp$ and $\Gamma_0^{\nu \mp}$ one easily deduces the following statement, see \cite{IR} and \cite{NPR} for similar results.

\begin{proposition}\label{prop_enumeration}
Let $m\in \C^*$ with $|\Re(m)|<1$, and let $\kappa, \nu \in \C$.
The following operators belong to $\E_o$:
\begin{enumerate}
\item[I.1)] $\W_{m,\kappa}^\mp$ if $(m,\kappa)$ is not an exceptional pair
and $\Re(m)\neq 0$,
\item[I.2)] $\W_{m,\kappa}^+ $ if $\Re(m)\neq 0$ and $\pi \in \Im\big(\frac1m \Ln(\varsigma)\big)$ but $-\pi \not\in \Im\big(\frac1m\Ln(\varsigma)\big)$,
\item[I.3)] $\W_{m,\kappa}^-$ if $\Re(m)\neq 0$ and $-\pi \in \Im\big(\frac1m \Ln(\varsigma)\big)$ but $\pi \not \in \Im\big(\frac1m\Ln(\varsigma)\big)$,
\item[I.4)] $\W_0^{\nu \mp} $ if $\nu$ is not an exceptional parameter,
\item[I.5)] $\W_0^{\nu -}$ if $\Im(\nu)=\frac\pi2$, and $\W_0^{\nu +}$ if $\Im(\nu)=-\frac\pi2$.
\end{enumerate}
The following operators belong to $\E_{|n|}$:
\begin{enumerate}
\item[II.1)]  $\W_{m,\kappa}^\mp$ if $(m,\kappa)$ is not an exceptional pair
and  $m=in$ for some $n \in \R^*$,
\item[II.2)] $\W_{m,\kappa}^+ $ if $m=in$ for some $n\in \R^*$ and $\pi \in \Im\big(\frac1m \Ln(\varsigma)\big)$ but $-\pi \not\in \Im\big(\frac1m\Ln(\varsigma)\big)$,
\item[II.3)] $\W_{m,\kappa}^-$ if  $m= in$ for some $n\in \R^*$ and $-\pi \in \Im\big(\frac1m \Ln(\varsigma)\big)$ but $\pi \not \in \Im\big(\frac1m\Ln(\varsigma)\big)$.
\end{enumerate}
In all other cases, the wave operators are not bounded and do no belong to any $C^*$-algebras.
\end{proposition}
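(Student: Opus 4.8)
The plan is to reduce the membership question to an elementary bookkeeping in the two ``variables'' $D$ and $X$. First I would expand each wave operator, as written in \eqref{eq_11} and \eqref{eq_12}, into a \emph{finite sum} of products of the form $a(D)\,b(X)$. For the first family, multiplying out the numerator gives
\begin{equation*}
\W_{m,\kappa}^\mp = \e^{\mp i\frac{\pi}{2}(m-\frac12)}\Big(\Xi_{\frac12}(-D)\Xi_m(D)\,b_1(X) - \varsigma\,\Xi_{\frac12}(-D)\Xi_{-m}(D)\,b_2(X)\Big),
\end{equation*}
where $b_1(x):=\big(1-\varsigma\e^{\mp i\pi m}\e^{2mx}\big)^{-1}$ and $b_2(x):=\e^{2mx}b_1(x)$. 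For the second family, writing $\gamma+X-\nu=(\gamma+X-\nu\mp i\frac\pi2)\pm i\frac\pi2$ in the numerator of \eqref{eq_12} turns $\W_0^{\nu\mp}$ into a sum of $\Xi_{\frac12}(-D)\Xi_0(D)$ and of a term $a(D)\,b_0(X)$ with $b_0(x):=\big(\gamma+x-\nu\mp i\frac\pi2\big)^{-1}$ and $a(\xi)=\e^{\pm i\frac\pi4}\Xi_{\frac12}(-\xi)\Xi_0(\xi)\,i\frac\pi2\big(\pm1-\tanh(\frac\pi2\xi)\big)$. Once this factorisation is in place, it suffices to check separately that every function of $D$ is admissible and that every function of $X$ lies in the function space attached to the target algebra, since a finite sum of generators of $\E_o$ (resp.\ of $\E_{|n|}$) again belongs to that algebra.

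The functions of $D$ cause no trouble. By the fact recalled just before the statement, $\xi\mapsto\Xi_{m'}(-\xi)\Xi_{m''}(\xi)$ belongs to $C\big([-\infty,\infty]\big)$ as soon as $\Re(m'),\Re(m'')>-1$; since $|\Re(m)|<1$ this covers $\Xi_{\frac12}(-D)\Xi_{\pm m}(D)$ and $\Xi_{\frac12}(-D)\Xi_0(D)$, while the extra factor $\tanh(\frac\pi2 D)$ is continuous with limits $\pm1$ at $\pm\infty$. Hence each function of $D$ occurring above lies in $C\big([-\infty,\infty]\big)$, the class allowed in both $\E_o$ and every $\E_n$.

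The decisive point is the behaviour of the functions of $X$, and this is where I expect the real work—and the dichotomy governing the statement—to sit. If $\Re(m)\neq0$, then $|\e^{2mx}|=\e^{2\Re(m)x}$ tends to $0$ at one end of $\R$ and to $+\infty$ at the other, so a direct computation shows that $b_1$ and $b_2$ each possess finite limits as $x\to+\infty$ and as $x\to-\infty$; consequently they belong to $C\big([-\infty,+\infty]\big)$ precisely when their common denominator has no real zero. By the computation already carried out in the proof of Proposition \ref{prop_1}, that denominator vanishes for some real $x$ exactly when the corresponding condition $\pm\pi\in\Im\big(\frac1m\Ln(\varsigma)\big)$ holds, the zeros forming the set $\Lambda_{m,\kappa}^\pm$; discarding this exceptional, unbounded situation leaves cases~I.1)--I.3), all of which land in $\E_o$. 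If instead $m=in$ with $n\in\R^*$, then $\e^{2mx}=\e^{2inx}$ is periodic of period $\frac{\pi}{|n|}$ and the modulus $|\varsigma\e^{\mp i\pi m}\e^{2mx}|=|\kappa|\e^{\pm\pi n}$ is constant; the denominator is therefore bounded away from $0$ exactly when $|\kappa|\e^{\pm\pi n}\neq1$, which is the bounded regime, and then $b_1,b_2\in C_{\frac{\pi}{|n|}}(\R)$, giving cases~II.1)--II.3) in $\E_{|n|}$ (for $m=in$ the single real equation $\pm\pi=-\frac1n\ln(|\kappa|)$ can hold for at most one sign, so no two-sided exceptional case arises here). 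Finally, for $H_0^\nu$ the only function of $X$ is $b_0$, which tends to $0$ at both ends and is continuous unless $\gamma+x-\nu\mp i\frac\pi2$ has a real zero, i.e.\ unless $\Im(\nu)=\mp\frac\pi2$; away from that value $b_0\in C\big([-\infty,+\infty]\big)$, yielding cases~I.4)--I.5) in $\E_o$. Since this $X$-factor never oscillates, no algebra $\E_n$ ever arises for the family $H_0^\nu$.

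It then remains to observe that the parameters excluded from the list above are exactly those already flagged as unbounded before the statement: for them the relevant denominator does vanish on $\R$, the corresponding function of $X$ has a genuine pole, and the wave operator is unbounded, hence cannot belong to any $C^*$-subalgebra of $\B\big(L^2(\R)\big)$. The main obstacle I anticipate is the third paragraph, namely verifying that the functions $b_1,b_2$—rational in $\e^{2mx}$—really do admit limits at $\pm\infty$ and thereby force the clean split between the ``asymptotically constant'' regime $\Re(m)\neq0$ and the ``periodic'' regime $\Re(m)=0$; once the zeros of the denominators are matched with $\Lambda_{m,\kappa}^\pm$ and $\Lambda_0^\nu$, membership in $\E_o$ or $\E_{|n|}$ is immediate from the definitions of these algebras.
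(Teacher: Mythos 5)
Your argument is correct and is precisely the ``careful analysis of the functions $\Gamma_{m,\kappa}^\mp$ and $\Gamma_0^{\nu\mp}$'' that the paper invokes without writing out: you expand each wave operator into a finite sum of generators $a(D)b(X)$, verify via the stated properties of $\Xi_m$ (and of $\tanh$) that every $D$-factor lies in $C\big([-\infty,\infty]\big)$, and then match the zeros of the $X$-denominators with $\Lambda_{m,\kappa}^\pm$ and $\Lambda_0^\nu$ to obtain the trichotomy asymptotically-constant~/~periodic~/~singular, exactly as in \cite{IR,NPR}. The only caveat is your last step: the claim that a pole in the $X$-factor forces unboundedness of the full product is asserted rather than proved, but this is the same precautious stance the paper itself takes in Remark \ref{rem_1}, so you are not weaker than the source.
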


\begin{remark}\label{rem_2}
When an operator is unbounded, it can not belong to any $C^*$-algebra
but it is still possible that its resolvent belong it.
It is thus a natural question to check if the unbounded wave operators
belong to the $C^*$-algebras introduced above. 
Since some essential information on these operators are still missing, 
as already mentioned in Remark \ref{rem_1}, we can not answer this question.
\end{remark}

Once we know that the wave operators belong to very explicit $C^*$-subalgebra of $\B\big(L^2(\R)\big)$, the third task consists in studying these algebras and their structures, and to deduce a suitable $C^*$-algebra framework for deducing index theorems.
The two algebras $\E_o$ and $\E_n$ will be studied independently, and we shall start with the former one.

The key observation for the analysis of $\E_o$ is that the ideal of compact operators $\K_\R:=\K\big(L^2(\R)\big)$ corresponds to the $C^*$-algebra generated
by products of the form $a(D)b(X)$ with $a,b\in C_0(\R)$,
with $C_0(\R)$ the algebra of continuous functions on $\R$ vanishing at $\pm \infty$.
Then, one easily infers that  $\E_o/\K_{\R}$ is isomorphic to $C(\square)$,
the algebra of continuous functions on the boundary $\square$
of the closed square $\blacksquare$.
Note that the unital quotient morphism $q_o:\E_o\to C(\square)$ is uniquely determined by
$$
q_o\big(a(D)b(X)\big) = \big(a(\cdot)b(-\infty),\;\! a(-\infty)b(\cdot),\;\!a(\cdot)b(+\infty),\;\! a(+\infty)b(\cdot)\big).
$$
In fact, the above notation corresponds to an embedding of the algebra $C(\square)$ as a subalgebra of
\begin{equation*}
C\big([-\infty,+\infty]\big)\oplus C\big([-\infty,+\infty]\big)\oplus C\big([-\infty,+\infty]\big)\oplus C\big([-\infty,+\infty]\big)
\end{equation*}
given by elements $(\Gamma_1, \Gamma_2, \Gamma_3, \Gamma_4)$ which coincide at the corresponding end points, that
is,
$\Gamma_1(-\infty) = \Gamma_2(-\infty)$, $\Gamma_2(+\infty) = \Gamma_3(-\infty)$, $\Gamma_3(+\infty) = \Gamma_4(+\infty)$, and
$\Gamma_4(-\infty) = \Gamma_1(+\infty)$.

Now, whenever the wave operators $\W_{m,\kappa}^\mp$ and $\W_0^{\nu \mp}$
belong to $\E_o$, the corresponding functions $\Gamma_{m,\kappa}^\mp$ or $\Gamma_0^{\nu \mp}$ admit a restriction to $\square$. More precisely in such a situation one easily deduces the following expressions for the restriction of $\Gamma_{m,\kappa}^-$ to $\square$~:
\begin{align*}
\Gamma_{m,\kappa;1}^-(\xi) & = \left\{\begin{matrix}  \e^{i\frac{\pi}{2}(\frac{1}{2}-m)}  \Xi_{\frac{1}{2}}(-\xi) \Xi_{m}(\xi)
& \hbox{ if } \ \Re(m)>0, \\
\e^{i\frac{\pi}{2}(\frac{1}{2}+m)}  \Xi_{\frac{1}{2}}(-\xi) \Xi_{-m}(\xi) & \hbox{ if } \ \Re(m)<0,
\end{matrix}\right.\\
\Gamma_{m,\kappa;2}^-(x) & =
\e^{i\pi(\frac{1}{2}-m)} \frac{1 -\varsigma \e^{+i \pi m} \e^{2mx} }{1-\varsigma \e^{- i \pi m} \e^{2mx}}, \\
\Gamma_{m,\kappa;3}^-(\xi) & = \left\{\begin{matrix} \e^{i\frac{\pi}{2}(\frac{1}{2}+m)}  \Xi_{\frac{1}{2}}(-\xi) \Xi_{-m}(\xi) & \hbox{ if } \ \Re(m)>0, \\
\e^{i\frac{\pi}{2}(\frac{1}{2}-m)}  \Xi_{\frac{1}{2}}(-\xi) \Xi_m(\xi)  & \hbox{ if } \ \Re(m)<0,
\end{matrix}\right.\\
\Gamma_{m,\kappa;4}^-(x) & = 1,
\end{align*}
and in the special case $\kappa=0$ one has $ \Gamma_{m,0;1}^-(\xi)= \Gamma_{m,0;3}^-(\xi)=\e^{i\frac{\pi}{2}(\frac{1}{2}-m)}  \Xi_{\frac{1}{2}}(-\xi) \Xi_m(\xi)$,
$ \Gamma_{m,0;2}^-(x)= \e^{i\pi(\frac{1}{2}-m)}$ and $ \Gamma_{m,0;4}^-(x)=1$.
Similarly, one has for the restriction of  $\Gamma_0^{\nu -}$ to $\square$
\begin{align*}
\Gamma_{0;1}^{\nu-}(\xi) & = \e^{i\frac{\pi}{4}}  \Xi_{\frac{1}{2}}(-\xi) \Xi_{m}(\xi) \\
\Gamma_{0;2}^{\nu-}(x) & =
\e^{i\frac{\pi}{4}} \frac{\gamma + x-\nu + i \frac{\pi}{2}}{\gamma + x-\nu - i \frac{\pi}{2}}, \\
\Gamma_{0;3}^{\nu-}(\xi) & = \e^{i\frac{\pi}{4}}  \Xi_{\frac{1}{2}}(-\xi) \Xi_{m}(\xi) \\
\Gamma_{0;4}^{\nu-}(x) & = 1,
\end{align*}
Note also that similar expressions for $\Gamma_{m,\kappa}^+$ and $\Gamma_0^{\nu +}$ can be computed, but they are not presented for brevity.

Before stating our first index theorem with the data mentioned above, two more information coming from scattering theory are necessary. The first one is related to the scattering operator. Let us recall that for a scattering system defined by the pair of operators $(H_{m,\kappa},H_{\rm D})$ the scattering operator $S_{m,\kappa}$ is given by the product
$W_{m,\kappa}^{-\#} \;\!W_{m,\kappa}^-$ where ${}^{\#}$ means the transpose operator.
Note that in  the self-adjoint case this definition corresponds to the more usual product $W_{m,\kappa}^{+*}W_{m,\kappa}^-$ and extends it when the operators are not self-adjoint. The scattering operator is known to commute with the reference operator, namely $H_{\rm D}$. In addition, this operator is unitarily equivalent to the multiplication operator in $L^2(\R)$ defined by the function $\Gamma_{m,\kappa;2}^-$. By analogy this operator will also be denoted by $\SS_{m,\kappa}$.
Clearly, a similar relation exists between the scattering operator $S_0^\nu$ for the pair $(H_0^\nu,H_{\rm D})$ and the multiplication operator in $L^2(\R)$ defined by the function $\Gamma_{0;2}^{\nu -}\equiv \SS_0^\nu$.
A key observation about these scattering operators is that whenever $(m,\kappa)$ or $\nu$ are not exceptional the functions $\Gamma_{m,\kappa;2}^-$ or  $\Gamma_{0;2}^{\nu -}$ are bounded and boundedly invertible on $\R$.
In these situations it then follows that the functions $\Gamma_{m,\kappa}^-$ and $\Gamma_0^{\nu -}$ defined on $\square$ are also invertible and boundedly invertible.
As a consequence, their winding numbers\footnote{Recall that for a function $f$ defined on a closed curve $\gamma$ and taking
values in $\C^*$ its winding number corresponds to number of times the function $t\mapsto f(t)$ turns around $0\in \C$
when $t$ runs on $\gamma$.} are well defined and will be denoted by $\wn$.
On the other hand, it is also easily observed that in the exceptional situations these functions are either not bounded or have an inverse which is not bounded.

The second necessary information is about the wave operators themselves. It is shown
in \cite{DR} that if $(m,\kappa)$ is not an exceptional pair the kernels of $W_{m,\kappa}^\mp$
are empty while the cokernels of these operators corresponds to the subspaces spanned by the eigenfunctions associated to the eigenvalues of $H_{m,\kappa}$. Similarly, if $\nu$ is not an exceptional parameter, the wave operators $W_0^{\nu \mp}$ have an empty kernel and a cokernel equal to the subspace spanned by the eigenfunctions of $H_0^\nu$.

With all the information collected so far, the following statement can easily be proved. It relies on the index map associated to the short exact sequence
$$
0\to \K_\R\to \E_o\to C(\square)\to 0
$$
and to the fact that $\W_{m,\kappa}^-$ is a lift for the invertible operator $\Gamma_{m,\kappa}^-\in C(\square)$ (and similarly $\W_{0}^{\nu -}$ is a lift for the invertible operator $\Gamma_0^{\nu -}\in C(\square)$). The details are provided in \cite{NPR}.
Note that the following statement applies for the cases I.1) and I.4) of Proposition
\ref{prop_enumeration}.
Let us still recall for clarity that the index of a Fredholm operator corresponds to the difference between the dimension of its kernel
and of its cokernel. This index will be denoted by $\Index$ in the sequel.

\begin{theorem}[Topological Levinson's theorem]\label{thm_1}
Let $m\in \C^*$ with $|\Re(m)|\in (0,1)$, and let $\kappa, \nu \in \C$.
If $(m,\kappa)$ and $\nu$ are not exceptional parameters, then the following relations hold:
$$
\wn\big[\Gamma_{m,\kappa}^-\big] = \hbox{ number of eigenvalues of }H_{m,\kappa}
$$
and
$$
\wn\big[\Gamma_{0}^{\nu -}\big] = \hbox{ number of eigenvalues of }H_{0}^\nu ,
$$
where the r.h.s.~are also equal to $-\Index(\W_{m,\kappa}^-)$ and $-\Index(\W_{0}^{\nu -})$, respectively.
 \end{theorem}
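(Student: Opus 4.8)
The plan is to deduce both identities from the index map associated with the short exact sequence
$$0\to \K_\R\to \E_o\to C(\square)\to 0$$
announced just before the statement. I treat the pair $(H_{m,\kappa},H_{\rm D})$ in detail, the case of $(H_0^\nu,H_{\rm D})$ being entirely parallel. First I would record that under the hypotheses $|\Re(m)|\in(0,1)$ and $(m,\kappa)$ non-exceptional, case I.1 of Proposition \ref{prop_enumeration} places $\W_{m,\kappa}^-$ in $\E_o$, and that its image under the quotient morphism $q_o$ is exactly the boundary function $\Gamma_{m,\kappa}^-\in C(\square)$ whose four components are listed above. Because $\Re(m)\neq 0$, the edge functions $\Gamma_{m,\kappa;1}^-$ and $\Gamma_{m,\kappa;3}^-$ are products of the nowhere-vanishing factors $\Xi_{\frac{1}{2}}(-\cdot)$ and $\Xi_{\pm m}(\cdot)$, hence invertible; the component $\Gamma_{m,\kappa;4}^-\equiv 1$ is trivially invertible; and the scattering component $\Gamma_{m,\kappa;2}^-$ is bounded and boundedly invertible precisely because $(m,\kappa)$ is not exceptional, as recalled above. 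Consequently $\Gamma_{m,\kappa}^-$ is invertible in $C(\square)$, so $\W_{m,\kappa}^-$ is a lift of an invertible element of the quotient and is therefore Fredholm on $L^2(\R)$.

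The index of this Fredholm operator is then computed in two independent ways. On the topological side, the six-term exact sequence in K-theory attached to the displayed extension furnishes a connecting map $\partial:K_1\big(C(\square)\big)\to K_0(\K_\R)\cong\Z$. Since the boundary $\square$ of the square is homeomorphic to a circle, $K_1\big(C(\square)\big)\cong\Z$ with generator detected by the winding number $\wn$, and the usual identity relating the connecting map to the analytical index of a Fredholm lift gives $\Index(\W_{m,\kappa}^-)=-\wn\big[\Gamma_{m,\kappa}^-\big]$, the winding being computed along the four oriented edges of $\square$ with matching orientations at the corners. On the analytical side, the scattering-theoretic input recalled before the statement shows that in the non-exceptional case $\ker(\W_{m,\kappa}^-)=\{0\}$ while the cokernel is spanned by the eigenfunctions of $H_{m,\kappa}$, whence $\Index(\W_{m,\kappa}^-)=-\#\sigma_\p(H_{m,\kappa})$.

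Comparing the two expressions for the index yields $\wn\big[\Gamma_{m,\kappa}^-\big]=\#\sigma_\p(H_{m,\kappa})=-\Index(\W_{m,\kappa}^-)$, which is the first assertion. Replacing $\W_{m,\kappa}^-$, $q_o(\W_{m,\kappa}^-)$ and the kernel/cokernel description by their $H_0^\nu$-counterparts and using case I.4 of Proposition \ref{prop_enumeration} gives the second identity verbatim. I expect the only genuinely delicate point to be the precise identification of the connecting map with the winding number: fixing the orientation of $\square$, the matching of the four components at the corners, and the sign convention in $\partial$ so that the relation comes out as $\Index=-\wn$ rather than $+\wn$. By contrast, the invertibility of $\Gamma_{m,\kappa}^-$ and the resulting Fredholmness are immediate from the explicit formulas, and the analytical index is handed to us directly by the scattering theory of \cite{DR}.
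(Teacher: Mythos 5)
Your proposal is correct and follows essentially the same route as the paper: the paper's own argument (sketched before the statement, with details deferred to \cite{NPR}) rests on exactly the ingredients you use, namely that $\W_{m,\kappa}^-\in\E_o$ is a lift of the invertible element $\Gamma_{m,\kappa}^-\in C(\square)\cong \E_o/\K_\R$, that the connecting map of the extension identifies $\Index(\W_{m,\kappa}^-)$ with $-\wn\big[\Gamma_{m,\kappa}^-\big]$, and that the kernel/cokernel description from \cite{DR} gives $\Index(\W_{m,\kappa}^-)=-\#\sigma_\p(H_{m,\kappa})$, with the $H_0^\nu$ case handled identically.
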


Note that the l.h.s.~of the above statement contains four contributions, one for each function living on the edges of the square. As already mentioned, the contribution of $\Gamma_2$ corresponds to the one of the scattering operator. In addition, the contribution due to $\Gamma_1$ and to $\Gamma_3$ correspond to \emph{corrections} to Levinson's theorem. Explanations on these corrections have been provided in \cite{R} and are quite common in any statement about Levinson's theorem. In our approach, these corrections are automatically taken into account.

\begin{remark}
The wave operators described in the cases I.2), I.3) and I.5) of Proposition \ref{prop_enumeration} also belong to $\E_o$ and the corresponding functions
$\Gamma_{m,\kappa}^-$ or $\Gamma_0^{\nu -}$ are well defined.
However, since these functions vanish at one point on $\square$ their winding
numbers are no more well-defined. Accordingly, the corresponding
wave operators are not Fredholm operators, and thus their analytic indexes are also
not well defined.
\end{remark}

Let us now turn our attention to the algebra $\E_n$ for $n>0$. Clearly, this
algebra does not contain $\K_\R$, and thus the previous construction does not apply.
In fact, this algebra contains all pseudo-differential operators of order $0$ with periodic coefficients.
In such a case the ideal $\K_\R$ has to be replaced by the ideal $\J_n$ defined by
\begin{equation*}
\J_n:=C^*\Big(a(D)b(X)\mid a\in C_0(\R),b\in C_{\frac{\pi}{n}}(\R) \Big).
\end{equation*}
Then, the quotient algebra $\E_n/\J_n$ can easily be computed and is isomorphic to $C_{\frac{\pi}{n}}(\R) \oplus C_{\frac{\pi}{n}}(\R)$.
The quotient morphism $q_n:\E_n\to  C_{\frac{\pi}{n}}(\R) \oplus C_{\frac{\pi}{n}}(\R) $ is uniquely determined by
$$
q_n\big(a(D)b(X)\big) = \big(a(-\infty)b(\cdot),\;\! a(+\infty)b(\cdot)\big).
$$

Now, let us consider $n\in \R^* $ and compute the image of $\W_{m,\kappa}^-$ by this quotient
map whenever $\W_{m,\kappa}^-$ belongs to $\E_{|n|}$, namely in the cases II.1) - II.3)
of Proposition \ref{prop_enumeration}. More precisely, for the operator $\W_{in,\kappa}^-$ one has
$$
q_{|n|}(\W_{in,\kappa}^-) =
\Big(
i\e^{\pi n} \frac{1 -\varsigma \e^{- \pi n} \e^{2inx} }{1-\varsigma \e^{+ \pi n} \e^{2inx}},1\Big).
$$
Note then that since $C_{\frac{\pi}{n}}(\R)$ can naturally be identified with $C(\S)$,
we define through this identification the winding number $\wn_{\frac{\pi}{n}}(f)$ of any bounded and boundedly invertible element
$f\in C_{\frac{\pi}{n}}(\R)$. Clearly, this is well defined if and only if $(in,\kappa)$ is not an exceptional pair.

In order to define an analytic index for $\W^-_{in,\kappa}$ we recall the construction provided in \cite[Sec.~4]{IR} about  the direct integral decomposition of $L^2(\R)$ useful for periodic systems, the so-called \emph{Floquet-Bloch decomposition}. More information can also be found in \cite[Sec.XIII.16]{RS}.
For simplicity, we provide only the construction for $n>0$, but the general
case can be obtained by replacing $n$ with $|n|$.
For each $\theta\in [0,2n)$ we set $\H_\theta:=L^2\big([0,\frac{\pi}{n}], \d x\big)$ endowed with the usual Lebesgue measure, and also define
\begin{equation*}
\H_n:=\int_{[0,2n)}^{\oplus}\H_\theta \frac{\d\theta}{2n}.
\end{equation*}
Then, if $\SS(\R)$ denotes the Schwartz space on $\R$, the map $\U_n:L^2(\R)\to\H_n$ defined for $\theta\in [0,2n)$ and $x\in[0,\frac{\pi}{n})$ by
\begin{equation*}
[\U_nf](\theta,x):=\sum_{k\in\Z}\e^{-i\frac{\pi}{n}k\theta}f\Big(x+\frac{\pi}{n}k\Big) \qquad \forall f \in \SS(\R),
\end{equation*}
extends continuously to a unitary operator.
The adjoint operator is then given by the formula
$$
[\U_n^*\varphi]\big(x+\frac{\pi}{n}k\big) = \int_0^{2n}\e^{i\frac{\pi}{n}k\theta}\varphi(\theta,x)\frac{\d \theta}{2n}.
$$
Moreover, one has
\begin{equation*}
\U_n D\U_n^*=\int_{[0,2n)}^{\oplus} D_x^{(\theta)} \frac{\d\theta}{2n},
\end{equation*}
where $D_x^{(\theta)}$ is the operator $-i\frac{\d}{\d x}$ on a fiber $\H_\theta$
with boundary condition $f(\frac{\pi}{n})=\e^{i\frac{\pi}{n} \theta}f(0)$.

Thus, for any operator of the form $a(D)b(X)$ with $a\in C_0\big([-\infty,\infty]\big)$
and $b\in C_{\frac{\pi}{n}}(\R)$, the operator $\U_na(D)b(X)\U_n^*$ is a decomposable operator with the fibers $a\big(D_x^{(\theta)}\big)b(X)$.
On suitable bounded decomposable operator $\Phi=\int_{[0,2n)}^{\oplus}\Phi(\theta) \frac{\d\theta}{2n}$
we also define the trace $\Trace_n$ by
\begin{equation*}
\Trace_n(\Phi)=\int_{0}^{2n}\trace_\theta\big(\Phi(\theta)\big) \frac{\d\theta}{2n}
\end{equation*}
where $\trace_\theta$ is the usual trace on $\H_\theta$.

Before stating our main result for the semi-Fredholm operator $\W_{in,\kappa}^-$, let us recall that $\W_{m,\kappa}^{\mp \#}$ denote the transpose operators of $\W_{m,\kappa}^\mp$, and that the following relations have been proved in \cite{DR} in the non exceptional case:
$$
\W_{m,\kappa}^{\pm \#} \W_{m,\kappa}^\mp = \one\qquad \hbox{ and }
\qquad  \W_{m,\kappa}^\mp \W_{m,\kappa}^{\pm \#} = \I_{\R_+}(H_{m,\kappa})
$$
where $\I_{\R_+}(H_{m,\kappa})$ is a projection related to the continuous spectrum
of $H_{m,\kappa}$. More precisely, the subspace spanned by this projection
is the image through the unitary transformation of the complementary to the subspace spanned by the eigenfunctions of the operator $H_{m,\kappa}$.
This latter subspace for $m=in$ is either infinite dimensional, or $0$-dimensional, as already mentioned in Proposition \ref{prop_eigenv}.
If we set $\I_{\rm p}(H_{m,\kappa}):=1-\I_{\R_+}(H_{m,\kappa})$
then one has:

\begin{theorem}\label{thm_2}
Consider $n>0$ and $\kappa \in \C$ such that $(in,\kappa)$ is not an exceptional pair.  Then,
\begin{equation*}
\wn_{\frac{\pi}{n}}\big(\SS_{in,\kappa}\big)
=-\Trace_n \big(\I_{\rm p}(H_{in,\kappa})\big).
\end{equation*}
\end{theorem}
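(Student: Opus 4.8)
The plan is to read the claimed equality as the analytic and topological evaluations of a single quantity, the $L^2$-index of the semi-Fredholm operator $\W_{in,\kappa}^-$ relative to the ideal $\J_n$. First I would record that, by case II.1) of Proposition \ref{prop_enumeration}, the operator $\W_{in,\kappa}^-$ belongs to $\E_n$ (here $n>0$), and that its image under the quotient morphism $q_n:\E_n\to C_{\frac{\pi}{n}}(\R)\oplus C_{\frac{\pi}{n}}(\R)$ is the element
$$
q_n(\W_{in,\kappa}^-) = \big(\SS_{in,\kappa},1\big),
$$
whose first component is bounded and boundedly invertible precisely because $(in,\kappa)$ is not exceptional. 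Thus $\W_{in,\kappa}^-$ is a lift of an invertible element of $\E_n/\J_n$, and is therefore semi-Fredholm relative to $\J_n$, so that its $L^2$-index is well defined.

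For the analytic side I would invoke the two relations recalled just before the statement, namely $\W_{in,\kappa}^{+\#}\W_{in,\kappa}^- = \one$ and $\W_{in,\kappa}^- \W_{in,\kappa}^{+\#} = \I_{\R_+}(H_{in,\kappa})$. The first shows that $\W_{in,\kappa}^-$ is injective, so its kernel projection vanishes; the second shows that its range projection is $\I_{\R_+}(H_{in,\kappa})$, so its cokernel projection is $\one - \I_{\R_+}(H_{in,\kappa}) = \I_{\rm p}(H_{in,\kappa})$. By Proposition \ref{prop_eigenv} the point spectrum is, for $m=in$, either empty or infinite; in both cases $\I_{\rm p}(H_{in,\kappa})$ lies in $\J_n$ and has finite $\Trace_n$, so the $L^2$-index reads
$$
\Trace_n\big(\one - \W_{in,\kappa}^{+\#}\W_{in,\kappa}^-\big) - \Trace_n\big(\one - \W_{in,\kappa}^- \W_{in,\kappa}^{+\#}\big) = -\Trace_n\big(\I_{\rm p}(H_{in,\kappa})\big).
$$

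For the topological side I would use the six-term exact sequence associated with $0\to \J_n \to \E_n \to \E_n/\J_n\to 0$ and its index map $\partial:K_1(\E_n/\J_n)\to K_0(\J_n)$. The trace induces a homomorphism $\Trace_n:K_0(\J_n)\to \R$, and the composition $\Trace_n\circ\partial$, evaluated on the class of $q_n(\W_{in,\kappa}^-)$, returns the same $L^2$-index. The remaining work is to identify this composition with the winding number. For this I would pass to the Floquet-Bloch decomposition $\U_n$: on each fibre $\H_\theta$ the operators become genuine one-dimensional pseudo-differential operators over one period, whose ordinary Fredholm index is a Cauchy/winding integral of the symbol, and integrating the fibrewise indices against $\frac{\d\theta}{2n}$ collapses to the winding number over the circle. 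Since $K_1(\E_n/\J_n)\cong\Z\oplus\Z$ splits along the two components and the second component of $q_n(\W_{in,\kappa}^-)$ is the constant $1$ with vanishing winding, only the first survives and contributes $\wn_{\frac{\pi}{n}}(\SS_{in,\kappa})$. Matching the two evaluations of the $L^2$-index then gives the claimed equality.

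The main obstacle is the identification $\Trace_n\circ\partial = \wn_{\frac{\pi}{n}}$, that is, the Atiyah $L^2$-index computation itself \cite{Ati}. The delicate points are to verify that $\I_{\rm p}(H_{in,\kappa})$ genuinely belongs to $\J_n$ with finite $\Trace_n$ despite the infinite and accumulating point spectrum, and to carry out the fibrewise index computation in the Floquet-Bloch representation carefully enough that the $\theta$-integration reduces to a single winding number with the correct orientation and sign. Everything else is a direct transcription of the $C^*$-algebraic machinery already set up in \cite{IR,NPR}.
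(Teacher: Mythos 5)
Your proposal frames the statement correctly as an $L^2$-index theorem and the analytic half is essentially right: from $\W_{in,\kappa}^{+\#}\W_{in,\kappa}^-=\one$ and $\W_{in,\kappa}^-\W_{in,\kappa}^{+\#}=\I_{\R_+}(H_{in,\kappa})$ the index, if defined, equals $-\Trace_n\big(\I_{\rm p}(H_{in,\kappa})\big)$. But the proof has a genuine gap at exactly the two points you yourself flag as "the main obstacle" and "delicate points": (a) the membership $\I_{\rm p}(H_{in,\kappa})\in\J_n$ with finite $\Trace_n$, and (b) the identification $\Trace_n\circ\partial=\wn_{\frac{\pi}{n}}$ on the relevant $K_1$-class. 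Neither is a routine transcription of \cite{IR,NPR}: (a) is not automatic because $\Trace_n$ is only defined on operators of the form $a(D)b(X)$ with sufficient decay in $a$, and one must actually exhibit $\I_{\rm p}(H_{in,\kappa})$ in that form; and (b) is the entire content of the theorem, so asserting it via "the Atiyah $L^2$-index computation itself" is circular as a proof. Moreover your sketch of (b) is shaky: the fibre operators $a\big(D_x^{(\theta)}\big)b(X)$ on $\H_\theta$ need not be Fredholm for individual $\theta$, and the whole point of the $L^2$-index framework is that only the $\theta$-averaged trace is meaningful; there is also an unaddressed sign/orientation issue between the two components of $\E_n/\J_n$ (the fibres at $D=-\infty$ and $D=+\infty$ enter the boundary map with opposite signs), which you wave at but do not resolve.

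The paper avoids the abstract identification entirely and proves the equality by computing both sides explicitly. The left-hand side is read off from the closed formula $\SS_{in,\kappa}(x)=i\e^{\pi n}\frac{1-\varsigma\e^{-\pi n}\e^{2inx}}{1-\varsigma\e^{\pi n}\e^{2inx}}$: the winding number is $-1$ if $\frac{1}{n}\ln(|\kappa|)\in(-\pi,\pi)$ and $0$ otherwise. The right-hand side is obtained by conjugating the commutator $\W_{in,\kappa}^-\W_{in,\kappa}^{+\#}-\W_{in,\kappa}^{+\#}\W_{in,\kappa}^-=-\I_{\rm p}(H_{in,\kappa})$ by explicit invertible operators, reducing it to two terms $I_{in,\kappa}$ and $J_{in,\kappa}$ built from $F_{in,\kappa}(X)$ and $G_n^\pm(D)$, expanding $F_{in,\kappa}$ in a Fourier series, and applying the trace formula
\begin{equation*}
\Trace_n\big(a(D)b(X)\big)=\frac{1}{2n}\int_{\R}a(\xi)\,\d\xi\times\frac{n}{\pi}\int_0^{\frac{\pi}{n}}b(x)\,\d x ,
\end{equation*}
which simultaneously settles point (a). Everything then hinges on the single Fourier coefficient $c_{-1}$ of $F_{in,\kappa}$, computed to be $\varsigma^{-1}(\e^{\pi n}-\e^{-\pi n})^{-1}$ when $\frac{1}{n}\ln(|\kappa|)\in(-\pi,\pi)$ and $0$ otherwise, matching the winding number case by case. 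If you want to keep your $K$-theoretic route, you would need to supply a proof of $\Trace_n\circ\partial=\wn_{\frac{\pi}{n}}$ for the extension $0\to\J_n\to\E_n\to C_{\frac{\pi}{n}}(\R)\oplus C_{\frac{\pi}{n}}(\R)\to 0$ with the correct signs, which in substance amounts to redoing the paper's computation on a generating class.
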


Let us emphasize that the l.h.s.~corresponds to the natural analytic index $\Index_n$
defined in terms of $\Trace_n$ and evaluated on $\W_{in,\kappa}^-$.
A proof for such a statement is provided in \cite{IR} but is valid only if
$H_{in,\kappa}$ is self-adjoint. This takes place if and only if $|\kappa|=1$.
For completeness we provide below an adaptation of the proof valid in the more general context of the present paper. We shall show in this proof that the above equality can only take two values:
either $-1$ when $H_{in,\kappa}$ has an infinite number of eigenvalues, or $0$ when this operator has no eigenvalue.

\begin{proof}
In this proof we assume that $\kappa\neq 0$ since in this case the statement is trivially satisfied.
From the equalities
$$
|\varsigma|=\Big|\kappa \frac{\Gamma(-in)}{\Gamma(in)}\Big|= \e^{n(\frac{1}{n}\ln(|\kappa|)}
$$
together with the equality
\begin{equation*}
\SS_{in,\kappa}(x)=i\e^{\pi n}\frac{1-\varsigma \e^{ -\pi n}\e^{2inx}}{1-\varsigma \e^{ \pi n}\e^{2inx}}
\end{equation*}
one easily infers that $\wn_{\frac{\pi}{n}}\big(\SS_{in,\kappa}\big)=-1$
if $\frac{1}{n}\ln(|\kappa|) \in (-\pi,\pi)$ while  one has
$\wn_{\frac{\pi}{n}}\big(\SS_{in,\kappa}\big) =0$ if
$\frac{1}{n}\ln(|\kappa|) \not \in [-\pi,\pi]$. Since the special case
$\frac{1}{n}\ln(|\kappa|)= \pm \pi$ is an exceptional situation,
it is disregarded.

Let us now consider an operator of the form $a(D)b(X)$ with $a\in C_0(\R)$
and $b\in C_{\frac{\pi}{n}}(\R)$, and the corresponding operator $a\big(D_x^{(\theta)}\big)b(X)$.
Since the eigenfunctions of the operator $D_x^{(\theta)}$ are provided by the functions
$$
[0,\frac{\pi}{n})\ni x \mapsto \sqrt{{\frac{n}{\pi}}}\e^{i(\theta + 2nk)x}\in \C, \qquad k \in \Z
$$
we infer that the Schwartz kernel of the operator $a\big(D_x^{(\theta)}\big)b(X)$ is given by
\begin{equation*}
K_{a(D_x^{(\theta)})b(X)}(x,y)=\frac{n}{\pi}\sum_{k\in\Z}a(\theta+2nk) \e^{i(\theta+2nk)(x-y)} b(y).
\end{equation*}
Thus, if $b(X)a\big(D_x^{(\theta)}\big)$ is
$\trace_\theta$-trace class for a.e.~$\theta \in [0,2n)$ we obtain
\begin{equation*}
\trace_\theta\big(a\big(D_x^{(\theta)}\big)b(X)\big)
=\sum_{k\in\Z}a(\theta+2nk)\times \frac{n}{\pi}\int_{0}^{\frac{\pi}{n}}b(x)\;\!\d x
\end{equation*}
and then
\begin{equation}\label{formula of tracen}
\Trace_n\big(a(D)b(X)\big)=\frac{1}{2n}\int_{\R}a(\xi)\;\!\d\xi\times \frac{n}{\pi}\int_{0}^{\frac{\pi}{n}}b(x)\;\!\d x.
\end{equation}
Note that these formulas are valid if $a$ has a fast enough decay, which will be the case in the sequel.
In addition, note also that the last term depends only on the $0$-th Fourier coefficient of the function $b$.
Our next aim is thus to show that $[\W_{in,\kappa}^-,\W_{in,\kappa}^{+\#}] = -\I_{\rm p}(H_{in,\kappa})$
can be rewritten in the above form.

Recall now that $\Xi_{\frac{1}{2}}(D)$ is a unitary operator, with $\Xi_{\frac{1}{2}}(D)^*=\Xi_{\frac{1}{2}}(-D)$.
One also infers from the definition in \eqref{def_Xi} that $\Xi_{in}(D)$ is invertible with $\Xi_{in}(D)^{-1} = \Xi_{in}(-D)$
and that $\Xi_{in}(D)^* = \Xi_{-in}(-D)$.
Then one gets
\begin{align*}
& \Xi_{in}(D)^{-1} \Xi_{\frac{1}{2}}(D)\Big(\W_{in,\kappa}^- \W_{in,\kappa}^{+\#}
- \W_{in,\kappa}^{+\#} \W_{in,\kappa}^-\Big)\Xi_{\frac{1}{2}}(D)^* \Xi_{in}(D)\\
& = \big(\one-\varsigma G^+_n(D)\e^{2inX}\big)F_{in,\kappa}(X)\big(\varsigma \e^{2inX}G^-_n(D)-\one\big)-\one,
\end{align*}
where
\begin{equation*}
F_{in,\kappa}(X):=\frac{-1}{(1 -\varsigma \e^{- \pi n}\e^{2inX})(1-\varsigma \e^{\pi n}\e^{2inX})}
\end{equation*}
and
\begin{equation*}
G_n^{\pm}(\xi):=\Xi_{\pm in}(-\xi)\Xi_{\mp in}(\xi).
\end{equation*}
From the identity $\Gamma(z+\frac{1}{2})\Gamma(-z+\frac{1}{2})=\frac{\pi}{\cos(\pi z)}$ one then infers that
$$
G_n^{\pm}(\xi)=\e^{\pm \pi n}\frac{\e^{\pi \xi}+\e^{\mp \pi n}}{\e^{\pi \xi}+\e^{\pm \pi n}},
$$
and by taking into account the identity \cite[(6.10)]{DR} written in our framework, namely
\begin{equation*}
\e^{2inX}G_n^-(D)+G_n^+(D)\e^{2inX}=2\cosh(\pi n)\e^{2inX},
\end{equation*}
one then gets
\begin{align*}
& \big(\one-\varsigma G^+_n(D)\e^{2inX}\big)F_{in,\kappa}(X)\big(\varsigma \e^{2inX}G^-_n(D)-\one\big)-\one \\
& = -  F_{in,\kappa}(X) - \varsigma^2G_n^+(D)\e^{2inX}F_{in,\kappa}(X)\e^{2inX}G_n^-(D) \\
&\quad +\varsigma F_{in,\kappa}(X)\e^{2inX}G_n^-(D) + \varsigma G_n^+(D)\e^{2in X}F_{in,\kappa}(X) -\one\\
&=- F_{in,\kappa}(X)-\varsigma^2\e^{2inX}F_{in,\kappa}(X)\e^{2inX}-\varsigma^2G_n^+(D)\big[\e^{2inX}F_{in,\kappa}(X)\e^{2inX},G_n^-(D)\big] \\
&\quad + \varsigma F_{in,\kappa}(X)\big(2\cosh(\pi n) \e^{2inX} -G_n^+(D)\e^{2in X}\big)+ \varsigma G_n^+(D)\e^{2in X}F_{in,\kappa}(X) -\one\\
&=-F_{in,\kappa}(X)\big(1- 2 \varsigma \cosh (\pi n) \e^{2inX} + \varsigma^2\e^{4inX}\big) -\one\\
&\quad -\varsigma^2G_n^+(D)\big[\e^{2inX}F_{in,\kappa}(X)\e^{2inX},G_n^-(D)\big]  - \varsigma \big[F_{in,\kappa}(X),G_n^+(D)\big]\e^{2in X}\\
&= - \underbrace{ \varsigma^2 G_n^+(D)\big[\e^{2inX}F_{in,\kappa}(X)\e^{2inX},G_n^-(D)\big]}_{=:I_{in,\kappa}}
- \underbrace{\varsigma \big[F_{in,\kappa}(X),G_n^+(D)\big]\e^{2in X}}_{=:J_{in,\kappa}}
\end{align*}

For the last step, observe that since the function $F_{in,\kappa}$ is a smooth $\frac{\pi}{n}$-periodic function,
its Fourier series converges uniformly. We can thus write $F_{in,\kappa}(X)=\sum_{\ell\in\Z}c_\ell \e^{2in\ell X}$.
Using the relation $\e^{isX}g(D)\e^{-isX}=g(D-s)$, which holds for any $g\in C_{\rm b}(\R)$ and $s\in\R$, we obtain
\begin{align*}
I_{in,\kappa}&=\varsigma^2\sum_{\ell\in\Z}c_\ell G_n^+(D)\Bigl\{G_n^-\bigl(D-2n(\ell+2)\bigr)-G_n^-(D)\Bigr\} \e^{2in(\ell+2)X}, \\
J_{in,\kappa}&=\varsigma\sum_{\ell\in\Z}c_\ell \Bigl\{G_n^+(D-2n\ell)-G_n^+(D)\Bigr\}  \e^{2in(\ell+1)X}.
\end{align*}
By applying then formula \eqref{formula of tracen} one infers that
$\Trace_n(I_{in,\kappa})=0$ since the $0$-th Fourier coefficient of the corresponding function $b$ is obtained for $\ell=-2$, but the first factor vanishes
precisely when $\ell=-2$.
On the other hand one has
\begin{align*}
\Trace_n(J_{in,\kappa})&=\varsigma c_{-1}\frac{1}{2n}\int_0^{2n}\sum_{k\in\Z}
\Bigl\{G_n^+\bigl(\theta+2n(k+1)\bigr)-G_n^+\bigl(\theta+2nk)\bigr)\Bigr\}\;\!\d\theta\\
&=\varsigma c_{-1}\Bigl\{G_n^+(\infty)-G_n^{+}(-\infty)\Bigr\}\\
&=\varsigma c_{-1}(\e^{\pi n}-\e^{-\pi n}).
\end{align*}
Finally, by collecting the result obtained so far and by using the cyclicity of the traces one gets
\begin{align*}
& \Trace_n\big(\W_{in,\kappa}^- \W_{in,\kappa}^{+\#}
- \W_{in,\kappa}^{+\#} \W_{in,\kappa}^-\big) \\
& = \Trace_n\Big(\Xi_{in}(D)^{-1} \Xi_{\frac{1}{2}}(D)\Big(\W_{in,\kappa}^- \W_{in,\kappa}^{+\#}
- \W_{in,\kappa}^{+\#} \W_{in,\kappa}^-\Big)\Xi_{\frac{1}{2}}(D)^* \Xi_{in}(D)\Big) \\
& =  - \varsigma c_{-1}(\e^{\pi n}-\e^{-\pi n}).
\end{align*}

For the computation of $c_{-1}$ it is enough to observe that if $\frac{1}{n}\ln(|\kappa|)\in (-\pi,\pi)$ then
\begin{align*}
F_{in,\kappa}(x) & = \frac{-1}{(1 -\varsigma \e^{- \pi n}\e^{2inx})(1-\varsigma \e^{\pi n}\e^{2inx})} \\
& = \frac{1}{\varsigma\e^{\pi n}} \e^{-2inx}\;\! \frac{1}{1-\varsigma \e^{-\pi n}\e^{2inx}}\ \frac{1}{1-\varsigma^{-1} \e^{-\pi n}\e^{-2inx}} \\
& = \frac{1}{\varsigma\e^{\pi n}} \e^{-2inx}\;\!  \sum_{j=0}^\infty \big(\varsigma \e^{-\pi n}\e^{2inx}\big)^j\;\!
\sum_{k=0}^\infty \big(\varsigma^{-1} \e^{-\pi n}\e^{-2inx}\big)^k,
\end{align*}
from which one infers by considering the diagonal sum that
$$
c_{-1} = \varsigma^{-1}\e^{-\pi n} \sum_{j=0}^\infty \big(\e^{-\pi n}\big)^{2j} = \varsigma^{-1} \frac{\e^{-\pi n}}{1-\e^{-2\pi n}}
=\varsigma^{-1}\frac{1}{\e^{\pi n}-\e^{-\pi n}}\ .
$$
It follows that $-\Trace_n(\I_{\rm p}\big(H_{in,\kappa})\big)=-1$ if
$\frac{1}{n}\ln(|\kappa|)\in (-\pi,\pi)$.
On the other hand, if $\frac{1}{n}\ln(|\kappa|)>\pi$ or if $\frac{1}{n}\ln(|\kappa|)<-\pi$ then one gets by a similar argument that the function $F_{in,\kappa}$ has a Fourier series with coefficient $c_{-1}$ equal to $0$. In such a case one gets $-\Trace_n(\I_{\rm p}\big(H_{in,\kappa})\big)=0$, as expected.
\end{proof}

\begin{remark}
Let us emphasize that the previous theorem is the first topological version of Levinson's theorem when an infinite number of eigenvalues is involved.
Note however that a \emph{generalized Levinson's theorem} involving an infinite
number of bound states already appeared in \cite{Rai, Tie}, but it corresponds to a
relation between the asymptotic behaviors of the spectral shift function and of the
eigenvalues counting functions.
A deeper understanding of the relation between our result and the results contained in these papers would certainly be valuable.
\end{remark}

\end{document}